\theoremstyle{definition}
\newtheorem{defi}{Definition}[section]
\newtheorem{prop}{Proposition}[section]
\newtheorem{hyp}{Assumption}[section]
\newtheorem{thm}{Theorem}[section]
\numberwithin{equation}{section}
\newcommand{\B}{\mathcal{B}}
\newcommand{\C}{\mathcal{C}}
\newcommand{\bigO}[1]{\ensuremath{\mathop{}\mathopen{}O\mathopen{}\left(#1\right)}}
\newcommand{\smallO}[1]{\ensuremath{\mathop{}\mathopen{}o\mathopen{}\left(#1\right)}}
\newcommand{\smallOp}[1]{\ensuremath{\mathop{}\mathopen{}o_{p}\mathopen{}\left(#1\right)}}
\newcommand{\CC}{\mathbb{C}}
\newcommand{\Card}{\mathrm{Card}}
\renewcommand{\dim}{\mathrm{dim}}
\newcommand{\EE}{\mathbb{E}}
\renewcommand{\H}{\mathcal{H}}
\newcommand{\Id}{\mathrm{Id}}
\newcommand{\LL}{\mathbb{L}}
\newcommand{\M}{\mathcal{M}}
\newcommand{\NN}{\mathbb{N}}
\newcommand{\PP}{\mathbb{P}}
\newcommand{\R}{\mathcal{R}_{n}}
\newcommand{\RR}{\mathbb{R}}
\newcommand{\TT}{\mathbb{T}}
\newcommand{\Tr}{\mathrm{Tr}\,}
\newcommand{\ZZ}{\mathbb{Z}}
\newcommand{\ud}{\,\mathrm{d}}
\newcommand{\bigint}{\@ifnextchar_\@bigintsub\@bigintnosub}
\def\@bigintsub_#1{\def\@int@subscript{#1}\@ifnextchar^\@bigintsubsup\@bigintsubnosup}
\def\@bigintsubsup^#1{\mathop{\text{\large$\int_{\text{\normalsize$\scriptstyle\@int@subscript$}}^{\text{\normalsize$\scriptstyle#1$}}$}}\nolimits}
\def\@bigintsubnosup{\mathop{\text{\large$\int_{\text{\normalsize$\scriptstyle\@int@subscript$}}$}}\nolimits}
\def\@bigintnosub{\@ifnextchar^\@bigintnosubsup\@bigintnosubnosup}
\def\@bigintnosubsup^#1{\mathop{\text{\large$\int^{\text{\normalsize$\scriptstyle#1$}}$}}\nolimits}
\def\@bigintnosubnosup{\mathop{\text{\large$\int$}}\nolimits}
\newcommand{\Var}{\text{Var}}
\DeclareMathOperator*{\argm}{argmin}
\newcommand{\norme}[1]{\left\Vert #1 \right\Vert }
\title{Random action of compact Lie groups and minimax estimation of a mean pattern}
\author{Jérémie Bigot, Claire Christophe and Sébastien Gadat \vspace{0.2cm}  \\
Institut de Math\'ematiques de Toulouse\\
Universit\'e de Toulouse et CNRS (UMR 5219)\\
31062 Toulouse, Cedex 9, France\\
{\small {\tt \{Jeremie.Bigot,  Claire.Christophe, Sebastien.Gadat\}@math.univ-toulouse.fr }}
 \vspace{0.2cm}}
\begin{document}
\maketitle

\begin{abstract}
This paper considers the problem of estimating a mean pattern in the setting of Grenander's pattern theory. Shape variability in a data set of curves or images is modeled by the random action of elements in a compact Lie group on an infinite dimensional space. In the case of observations contaminated by an additive Gaussian white noise, it is shown that  estimating a reference template in the setting of Grenander’s pattern theory falls into the category of deconvolution problems over Lie groups. To obtain this result, we build an estimator of a mean pattern by using  Fourier deconvolution and harmonic analysis on compact Lie groups.  In an asymptotic setting where the number of observed curves or images tends to infinity, we derive upper and lower bounds for the minimax quadratic risk over Sobolev balls. This rate depends on the smoothness of the density of the random Lie group elements representing shape variability in the data, which makes a  connection between estimating a mean pattern and standard deconvolution problems in nonparametric statistics.
\end{abstract}

\noindent \emph{Keywords:} Grenander's pattern theory; Shape variability; Lie groups; Harmonic analysis;  Random action; Mean pattern estimation; Reference template; Deconvolution; Sobolev space; Minimax rate.

\noindent\emph{AMS classifications:} Primary 62G08; secondary 43A30

\subsubsection*{Acknowledgements}

The authors acknowledge the support of the French Agence Nationale de la Recherche (ANR) under reference ANR-JCJC-SIMI1 DEMOS.

\section{Introduction}

In signal and image processing, data are often in the form of a set of $n$ curves or images $Y_{1},\ldots,Y_{n}$. In many applications, observed curves or images have a similar structure which may lead to the assumption that these observations are random elements which vary around the same mean pattern (also called reference template). However, due to additive noise and shape variability in the data, this mean pattern is typically unknown and has to be estimated.  In this setting, a widely used approach is Grenander’s pattern theory \cite{Gre,gremil} which models shape variability by the action of a Lie group on an infinite dimensional space of curves or images.  In the last decade, the study of  transformation Lie groups  to model shape variability of images has been an active research field, and we refer to \cite{MR2176922,TYShape} for a recent overview of the theory of deformable templates. Currently, there is also a growing interest in statistics on the problem of estimating the mean pattern of a set of curves or images using deformable templates \cite{allason,MR2730643,BG10,BGL09,BGV09,mmty07}. In this paper, we focus on the problem of constructing asymptotically minimax estimators of a mean pattern using noncommutative Lie groups  to model shape variability. The main goal of this paper is to show that estimating a reference template in the setting of Grenander’s pattern theory falls into the category of deconvolution problems over Lie groups as formulated in \cite{MR2446754}.

To be more precise, let $G$ be a connected and compact Lie group. Let $ \LL^2(G)$ be the Hilbert space of complex valued, square integrable functions on the group G with respect to the Haar measure $\ud g$. We propose to study the nonparametric estimation of a complex valued function $f^{\star} : G \to \CC$ in the following deformable white noise model
\begin{eqnarray}\label{modele4}
\ud Y_m(g) &=& f_m(g)\ud g+\varepsilon\ud W_m(g), \quad g\in G,\ m \in[\![1,n]\!]
\end{eqnarray}
where
$$
f_m(g)=f^{\star}(\tau_m^{-1}g).
$$
The $\tau_m$'s are independent and identically distributed (i.i.d) random variables belonging to $G$ and the $W_m$'s are independent standard Brownian sheets on the topological space $G$ with reference measure $\ud g$. For all $m = 1,\ldots,n$, $\tau_m$ is also supposed to be independent of $W_m$. 

In  \eqref{modele4} the function $f^{\star}$ is the unknown mean pattern to estimate in the asymptotic setting $n \to + \infty$, and $ \LL^2(G)$ represents an infinite dimensional space of curves or images. The $\tau_m$'s are random variables acting on  $ \LL^2(G)$  and they model shape variability in he data. The $W_{m}$ model intensity variability in the observed curves or images. In what follows, the random variables $\tau_m$ are also supposed to have a known density $h \in \LL^2(G)$. We will show that  $h$ plays the role of the kernel a convolution operator that has to be inverted to construct an optimal (in the minimax sense) estimator of $f^{\star}$. Indeed, since $W_{m}$ has zero expectation, it follows that the expectation of the $m$-th observation in \eqref{modele4} is equal to
$$
\EE f_m(g) =  \int_{G}  f^{\star}(\tau^{-1}g) h(\tau) \ud  \tau \mbox{ for any } m \in[\![1,n]\!].
$$
Therefore, $\EE f_m(g) = f^{\star} * h$ is the convolution over the group $G$ between the function $f^{\star}$ and the density $h$. Hence, we  propose to build an estimator of $f^{\star}$ using a regularized deconvolution method over Lie groups. This class of inverse problems is based on the use of harmonic analysis and Fourier analysis on compact Lie groups to transform convolution in a product of Fourier coefficients. However, unlike standard Fourier deconvolution on the torus, when  $G$ is not a commutative group,  the Fourier coefficients of a function in $\LL^2(G)$ are no longer complex coefficients but grow in dimension with increasing ``frequency". This somewhat complicates both the inversion process and the study of the asymptotic minimax properties of the resulting estimators.

In \cite{BLV09}, a model similar to  \eqref{modele4} has been studied where $n$ is held fixed, and the $\tau_m$'s are not random but deterministic parameters to be estimated in the asymptotic setting $\epsilon \to 0$ using semi-parametric statistics techniques. The potential of  using noncommutative harmonic analysis  for various applications  in  engineering is well described in \cite{MR1885369}. The contribution of this paper is thus part of the growing interest  in nonparametric statistics and inverse problems on the use of harmonic analysis on Lie groups \cite{MR1635446,MR1889831,MR2446754,MR1873674,MR2772533,MR2594372,MR2045023}.  

Our construction of an estimator of the mean pattern in \eqref{modele4} is inspired by the following problem of stochastic deconvolution over Lie groups introduced in  \cite{MR2446754}: estimate $f^{\star} \in \LL^2(G)$ from the regression model
\begin{equation}
y_{j} =  \int_{G}  f^{\star}(\tau^{-1}g_{j}) h(\tau) \ud  \tau + \eta_{j}, \quad g_{j}\in G, \; j \in[\![1,n]\!] \label{modelKK}
\end{equation}
where $h$ is a known convolution kernel, the $g_{j}$'s are ``design points'' in $G$,  and the $\eta_{j}$'s are independent realizations of a random noise process with zero mean and finite variance.  In \cite{MR2446754} a notion of asymptotic minimaxity over $\LL^2(G)$ is introduced, and the authors derive upper and lower bounds for a minimax risk over Sobolev balls. In this paper we also introduce a notion of minimax risk in model  \eqref{modele4}. However, deriving upper and lower bounds of the minimax risk for the estimation of $f^{\star}$  is significantly more difficult  in \eqref{modele4} than in model \eqref{modelKK}. This is due to the fact that there are two sources of noise in model \eqref{modele4}: a source of  additive Gaussian noise $W_m$ which is a classical one for studying minimax properties of an estimator, and a source of shape variability due to the $\tau_m$'s which is much more difficult to treat. In particular, standard methods to derive lower bounds of the minimax risk in classical white noise models  such as  Fano's Lemma are not adapted to the source of shape variability in  \eqref{modele4}. We show that one may use the Assouad’s cube technique (see e.g.\ \cite{MR2724359} and references therein), but  it has to be carefully adapted to  model \eqref{modele4}.

The paper is organized as follows. In Section \ref{sec:estim}, we describe the construction of our estimator using a deconvolution step and Fourier analysis on compact Lie groups. We also define a notion of asymptotic optimality in the minimax sense for estimators of the mean pattern. In Section \ref{sec:bounds}, we derive an upper bound on the minimax risk that depends on smoothness assumptions on the density $h$. A lower bound  on the minimax risk is also given. All proofs are gathered in a technical appendix. At the end of the paper, we have also included some technical materials about Fourier analysis on compact Lie groups, along with some formula for the rate of convergence of the eigenvalues of the Laplace-Beltrami operator which are needed to derive our asymptotic rates of convergence.  

\section{Mean pattern estimation via deconvolution on Lie groups} \label{sec:estim}

In this section, we use various concepts from harmonic analysis on Lie groups which are defined in Appendix \ref{app:Liegroup}.

\subsection{Sobolev space in $\LL^2(G)$}

Let $\widehat{G}$ be the set of equivalence classes of irreducible representations of $G$ that is  identified  to the set of unitary representations of each class. For   $\pi\in\widehat{G}$ and $g\in G$ one has that $\pi(g)\in GL_{d_{\pi}\times d_{\pi}}(\CC)$ (the set of $d_{\pi} \times d_{\pi}$ nonsingular matrices with complex entries) where $d_{\pi}$ is the dimension of $\pi$. By the Peter-Weyl theorem (see Appendix \ref{PWpartie}), any function $f\in\LL^2(G)$ can be decomposed as
\begin{equation}
f(g)=\sum_{\pi\in \widehat{G}}d_{\pi}\Tr\left(\pi(g)c_{\pi}(f)\right), \label{eq:decomp}
\end{equation}
where $\Tr$ is the trace operator and $c_{\pi}(f) =\int_G f(g)\pi(g^{-1}) \ud g$ is the $\pi$-th Fourier coefficient of $f$ (a $d_{\pi} \times d_{\pi}$ matrix). The decomposition formula \eqref{eq:decomp} is an analogue of the usual Fourier analysis in  $\LL^2([0,1])$ which corresponds to the situation $G = \RR/\ZZ$ (the torus in dimension 1) for which $\widehat{G} = \ZZ$, the representations $\pi$ are the usual trigonometric polynomials $\pi(g) = e^{i 2 \boldsymbol{\pi} \ell g}$ for some $\ell \in \ZZ$ (with the bold symbol $\boldsymbol{\pi}$ denoting the number Pi). In this case, the matrices $c_{\pi}(f)$ are one-dimensional ($d_{\pi} = 1$) and they equal the standard Fourier coefficients $c_{\pi}(f) = c_{\ell}(f)  =\int_{0}^{1} f(g) e^{- i 2 \boldsymbol{\pi} \ell g} \ud g$. For  $G = \RR/\ZZ$, one thus retrieves the classical Fourier decomposition of a periodic function $f : [0,1] \to \RR$ as $f(g)=\sum_{\ell \in \ZZ } c_{\ell}(f)  e^{i 2\boldsymbol{\pi} \ell g}$.


\begin{defi}
Let $k\in\NN^*$. Let $A \in \M_{k\times k}(\CC)$ (the set of $k \times k$ matrices with complex entries). The Frobenius norm of $A$ is defined by $ \norme{A}_F^2=\sqrt{\Tr\left(A\overline{A}^t\right)}$. It is the norm induced by the inner product $\langle A,B \rangle_{F}=\Tr(A \overline{B}^t)$ of two matrices $A,B\in\M_{k\times k}(\CC)$.
\end{defi}
By Parseval's relation, it follows that 
$
 ||f||^2  =  ||f||_{\LL^2(G)}^2 :=  \int_G |f(g)|^2\ud g =\sum_{\pi\in\widehat{G}}d_{\pi}\norme{c_{\pi}(f)}_F^2
$
for any $f \in \LL^2(G)$.
The following definitions of a Sobolev norm and Sobolev spaces have been proposed in \cite{MR2446754}.
\begin{defi}
Let $f\in\LL^2(G)$ and $s>\dim(G)/2 $. The Sobolev norm of order $s$ of  $f$ is defined by
$\|f\|_{H_s}^2 = \int_G |f(g)|^2\ud g + \sum_{\pi\in\widehat{G}}\lambda_{\pi}^s d_{\pi}\Tr\left(c_{\pi}(f)\overline{c_{\pi}(f)}^t\right)=\int_G |f(g)|^2\ud g + \sum_{\pi\in\widehat{G}}\lambda_{\pi}^s d_{\pi}\norme{c_{\pi}(f)}_F^2,$
where $\lambda_{\pi}$ is the eigenvalue value of $\pi$ associated to the Laplace-Beltrami operator induced by the Riemannian structure of the Lie group $G$.
\end{defi}

\begin{defi}
Let $s>\dim(G)/2 $ and denote by $C^{\infty}(G)$ the space of infinitely differentiable functions on $G$. The Sobolev space $H_s(G)$ of order $s$  is the completion of $C^{\infty}(G)$ with respect to the norm $\|\cdot\|_{H_s}$. Let $A>0$. The Sobolev ball of radius $A$ and order $S$ in $\LL^2(G)$ is defined as
$$
H_s(G,A)=\left\{f\in H_s(G)\ :\ \|f\|_{H_s}^2\leq A^2\right\}.
$$
\end{defi}
It can be checked that $H_s(G)$ corresponds to the usual notion of a Sobolev space in the case $G = \RR/\ZZ$. Now, let $\hat{f} \in \LL^2(G)$ be an estimator of $f^{\star}$ {\it i.e.} a measurable mapping of the random processes $Y_{m},m=1,\ldots,n$ taking its value in $\LL^2(G)$. The quadratic risk of an estimator $\hat{f} $ is defined as
$$
R(\hat{f},f^{\star})=\EE\left(\|\hat{f}-f^{\star}\|^2\right) = \EE \left(  \int_G |\hat{f}(g)-f^{\star}(g)|^2\ud g \right).
$$
\begin{defi}
The minimax risk over Sobolev balls associated to model \eqref{modele4} is defined as
$$
\R(A,s) = \inf_{\hat{f} \in\LL^2(G)}  \sup_{f^{\star} \in H_s(G,A)} R(\hat{f},f^{\star}),
$$
where the above infimum is taken over the set all estimators.
\end{defi}
The main goal of this paper is then to derive asymptotic upper and lower bounds on the minimax risk $\R(A,s)$ as $n \to + \infty$.

\subsection{Construction of the estimator}

First, note that the white noise model  \eqref{modele4}  has to be interpreted in the following sense: let $f \in \LL^2(G)$, then conditionally to $\tau_m$  each
integral $\int_{G} f(g) \ud Y_m(g) $ of the ``data'' $\ud Y_m(g)$ is a random variable normally distributed with mean $\int_{G} f(g) f^{\star}(\tau_m^{-1}g) \ud g $ and variance $\varepsilon^2 \int_{G} |f(g)|^2 dg$. Moreover, $\EE \left( \int_{G} f_1(g) \ud W_m(g)  \int_{G} f_2(g) \ud W_m(g) \right) = \int_{G} f_1(g) \overline{f_2} (g) \ud g $ for $f_1,f_2 \in \LL^2(G)$ and any $m \in[\![1,n]\!]$.
Therefore, using Fourier analysis on compact Lie groups, one may re-write model \eqref{modele4} in the Fourier domain as
\begin{eqnarray} \label{modele4transfome}
c_{\pi}(Y_m) &=& \int_G \pi(g^{-1})\ud Y_m (g)=c_{\pi}(f_m) +\varepsilon c_{\pi}(W_m), \; \mbox{ for } \pi\in\widehat{G} \mbox{ and } m\in[\![1,n]\!],
\end{eqnarray}
where
$$
c_{\pi}(f_m) = \int_G f_m(g)\pi(g^{-1}) \ud g \mbox{ and } c_{\pi}(W_m) = \int_G \pi(g^{-1})\ud W_m (g).
$$
Note that $c_{\pi}(f_m) = \int_G f^{\star}(\tau_m^{-1}g)\pi(g^{-1})\ud g= \int_G f^{\star}(g)\pi((\tau_m g)^{-1})\ud g$ which implies that
$$
c_{\pi}(f_m) = c_{\pi}(f^{\star})\pi(\tau_m^{-1}), \; m\in[\![1,n]\!].
$$
Remark also that the coefficients $(c_{\pi}(W_m))_{k,l}$ of the matrix $c_{\pi}(W_m)\in\M_{d_{\pi},d_{\pi}}(\CC)$ are independent complex random variables  that are normally distributed with zero expectation and variance $d_{\pi}^{-1}$. Moreover, note that
$$
\EE\left(\pi(\tau_m^{-1})\right)=c_{\pi}(h) \mbox{ and } \EE\left(c_{\pi}(Y_m)\right) =c_{\pi}(f^{\star})c_{\pi}(h).
$$
Therefore, if we assume that $c_{\pi}(h)$ is an invertible matrix, it follows that   an unbiased estimator of the  the $\pi$-th Fourier coefficient of $f^{\star}$ is given by the following deconvolution step in the Fourier domain 
\begin{equation}
\widehat{c_{\pi}(f^{\star})}=\frac{1}{n}\sum_{m=1}^n c_{\pi}(Y_m)c_{\pi}(h)^{-1}. \label{eq:deconv}
\end{equation}
An estimator of $f^{\star}$ can then be constructed by defining for $g \in G$
\begin{eqnarray}
 \hat{f}^{\star}_T(g) & = & \sum_{\pi\in \widehat{G}_T}d_{\pi}\Tr\left(\pi(g) \widehat{c_{\pi}(f^{\star})} \right)   \nonumber \\
& =& \frac{1}{n}\sum_{m=1}^n \sum_{\pi\in \widehat{G}_T}d_{\pi}\Tr\left(\pi(g)c_{\pi}(Y_m)c_{\pi}(h)^{-1}\right),  \label{estf2}
\end{eqnarray}
where $\widehat{G}_T=\left\{\pi\in\widehat{G}\ :\ \lambda_{\pi}<T\right\}$ for some $T>0$ whose choice has to be discussed (note that the cardinal of $\widehat{G}_T$ is finite).

\subsection{Regularity assumptions on the density $h$}

It is well-known that the difficulty of a deconvolution problem is quantified by the smoothness of the convolution kernel. The rate of  convergence that can be expected from any estimator depends on such smoothness assumptions. This issue has been  well studied in the nonparametric statistics literature on standard deconvolution problems (see e.g.\ \cite{MR1126324}). Following the approach proposed in \cite{MR2446754}, we now discuss a  smoothness assumption on the convolution kernel $h$.

\begin{defi}
Let $k\in\NN^*$ and $|.|_2$ be the standard Euclidean norm on $\CC^k$. The operator norm of   $A\in\M_{k\times k}(\CC)$ is 
$\|A\|_{op}= \sup_{u\neq 0}\frac{|Au|_2}{|u|_2}.$
\end{defi}

\begin{defi}
A function $f \in \LL^2(G)$ is said to be smooth of order  $\nu\geq 0$ if $c_{\pi}(f)$ is an invertible matrix for any $\pi \in \widehat{G}$, and if there exists two constants $C_1,\ C_2>0$ such that 
$$ \norme{c_{\pi}(f)^{-1}}_{op}^2\leq C_1 \lambda_{\pi}^{\nu} \mbox{ et } \|c_{\pi}(f)\|_{op}^2 \leq C_2 \lambda_{\pi}^{-\nu} \mbox{ for all } \pi\in\widehat{G}.$$
\end{defi}

\begin{hyp}\label{hregu}
The density $h$ is smooth of order $\nu\geq 0$.
\end{hyp}

Note that Assumption \ref{hregu} corresponds to the case where, in most applications, the convolution kernel $h$ leads to an inverse problem that is ill-posed, meaning in particular that there is no bounded inverse deconvolution kernel. This can be seen in the assumption $\norme{c_{\pi}(f)^{-1}}_{op}^2\leq C_1 \lambda_{\pi}^{\nu}$ which accounts for the setting where $\lim_{\lambda_{\pi} \to + \infty} \norme{c_{\pi}(f)^{-1}}_{op} = + \infty$ meaning that the mapping $f \mapsto f * h$ does not have a bounded inverse in $\LL^2(G)$. Example of such convolution kernels are discussed in \cite{MR1873674,MR2446754}, and we refer to these papers and references therein for specific examples.

\section{Upper and lower bounds} \label{sec:bounds}

The following theorem gives the asymptotic behavior of the quadratic risk of $\hat{f}_T$ over Sobolev balls using an appropriate choice for the regularization parameter $T$.

\begin{thm} \label{th:upperbound}
Suppose that Assumption \ref{hregu} holds. Let $\hat{f}_T$ be the estimator defined in \eqref{estf2} with $T = T_{n} =\lfloor n^{\frac{2}{2s+2\nu+\dim(G)}}\rfloor$. Let $s> 2 \nu + \dim(G)$. Then, there exists a constant $K_{1} >0$ such that
$$\limsup_{n\to\infty}\sup_{f^{\star}\in H_s(G,A)}n^{\frac{2s}{2s+2\nu+\dim(G)}}R(\hat{f}_{T_{n}},f^{\star})\leq K_{1}.$$
\end{thm}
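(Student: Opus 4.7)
My plan is to carry out a bias--variance decomposition of the quadratic risk in the Fourier domain, combining Parseval's identity, the Peter--Weyl formula, Assumption~\ref{hregu}, and the Weyl asymptotics for the eigenvalues $\lambda_{\pi}$ of the Laplace--Beltrami operator on $G$. By Parseval,
\begin{equation*}
R(\hat{f}_{T}, f^{\star}) = \sum_{\pi \in \widehat{G}_{T}} d_{\pi}\, \EE\norme{\widehat{c_{\pi}(f^{\star})} - c_{\pi}(f^{\star})}_{F}^{2} + \sum_{\pi \notin \widehat{G}_{T}} d_{\pi}\, \norme{c_{\pi}(f^{\star})}_{F}^{2}.
\end{equation*}
The second sum is a pure truncation bias: since $\lambda_{\pi} \geq T$ for $\pi \notin \widehat{G}_{T}$ and $f^{\star} \in H_{s}(G, A)$, it is bounded by $A^{2} T^{-s}$. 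It remains to control the variance, which is the delicate part.

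The processes $Y_{m}$ are i.i.d.\ and $\widehat{c_{\pi}(f^{\star})}$ is the empirical mean of an unbiased estimator by \eqref{eq:deconv}, so I would first reduce to a one-sample variance, $\EE\norme{\widehat{c_{\pi}(f^{\star})} - c_{\pi}(f^{\star})}_{F}^{2} = \tfrac{1}{n}\EE\norme{c_{\pi}(Y_{1})c_{\pi}(h)^{-1} - c_{\pi}(f^{\star})}_{F}^{2}$. Plugging in $c_{\pi}(Y_{1}) = c_{\pi}(f^{\star})\pi(\tau_{1}^{-1}) + \varepsilon c_{\pi}(W_{1})$ and using the independence of $\tau_{1}$ and $W_{1}$ together with $\EE c_{\pi}(W_{1}) = 0$, the cross terms vanish and the one-sample variance decomposes as $A_{\pi} + B_{\pi}$, where
\begin{equation*}
A_{\pi} = \EE\norme{c_{\pi}(f^{\star})\bigl(\pi(\tau_{1}^{-1})c_{\pi}(h)^{-1} - I\bigr)}_{F}^{2}, \qquad B_{\pi} = \varepsilon^{2}\, \EE\norme{c_{\pi}(W_{1})c_{\pi}(h)^{-1}}_{F}^{2}.
\end{equation*}

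For the Gaussian term $B_{\pi}$, the Schur orthogonality relations combined with the normalization $\mathrm{var}((c_{\pi}(W_{1}))_{k,l}) = d_{\pi}^{-1}$ give $\EE[c_{\pi}(W_{1})^{\ast} c_{\pi}(W_{1})] = I_{d_{\pi}}$, hence $B_{\pi} = \varepsilon^{2}\norme{c_{\pi}(h)^{-1}}_{F}^{2} \leq \varepsilon^{2}C_{1}d_{\pi}\lambda_{\pi}^{\nu}$ by Assumption~\ref{hregu}. For the shape-variability term $A_{\pi}$, the key observation is that $\pi(\tau_{1}^{-1})$ is a unitary matrix, so $\norme{c_{\pi}(f^{\star})\pi(\tau_{1}^{-1})}_{F} = \norme{c_{\pi}(f^{\star})}_{F}$; combining this with $\norme{AB}_{F} \leq \norme{A}_{F}\norme{B}_{op}$ and the elementary inequality $\EE\norme{X - \EE X}_{F}^{2} \leq \EE\norme{X}_{F}^{2}$ (applied to $X = c_{\pi}(f^{\star})\pi(\tau_{1}^{-1})c_{\pi}(h)^{-1}$, whose expectation equals $c_{\pi}(f^{\star})$) yields
\begin{equation*}
A_{\pi} \leq \norme{c_{\pi}(f^{\star})}_{F}^{2}\norme{c_{\pi}(h)^{-1}}_{op}^{2} \leq C_{1}\norme{c_{\pi}(f^{\star})}_{F}^{2}\lambda_{\pi}^{\nu}.
\end{equation*}

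Weighting these bounds by $d_{\pi}/n$ and summing over $\pi \in \widehat{G}_{T}$, the Gaussian contribution becomes $\tfrac{\varepsilon^{2}C_{1}}{n}\sum_{\pi\in\widehat{G}_{T}} d_{\pi}^{2}\lambda_{\pi}^{\nu}$, which I would control by $O(T^{\nu + \dim(G)/2}/n)$ using the Weyl estimate from the appendix (eigenvalues of the Laplacian below $T$, counted with multiplicity $d_{\pi}^{2}$, grow like $T^{\dim(G)/2}$). The shape contribution becomes $\tfrac{C_{1}}{n}\sum_{\pi\in\widehat{G}_{T}} d_{\pi}\norme{c_{\pi}(f^{\star})}_{F}^{2}\lambda_{\pi}^{\nu} \leq \tfrac{C_{1}}{n}\norme{f^{\star}}_{H_{\nu}}^{2} \leq \tfrac{C\,A^{2}}{n}$ by the Sobolev embedding $H_{s}(G,A) \subset H_{\nu}(G, C A)$, which is available since $\nu \leq s$ under the standing hypothesis $s > 2\nu + \dim(G)$. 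Adding bias and variance then gives $R(\hat{f}_{T}, f^{\star}) \leq A^{2}T^{-s} + \tfrac{C'}{n}(A^{2} + \varepsilon^{2}T^{\nu+\dim(G)/2})$, and the choice $T_{n} = \lfloor n^{2/(2s+2\nu+\dim(G))}\rfloor$ balances $T^{-s}$ against $T^{\nu+\dim(G)/2}/n$ to yield exactly the rate $n^{-2s/(2s+2\nu+\dim(G))}$. The main obstacle is the shape term $A_{\pi}$: because the i.i.d.\ random group elements $\tau_{m}$ enter nonlinearly as $d_{\pi} \times d_{\pi}$ unitary matrices $\pi(\tau_{m}^{-1})$, one has to orchestrate the Frobenius and operator norms at precisely the right step so that the Sobolev decay of $c_{\pi}(f^{\star})$ compensates the ill-posedness factor $\lambda_{\pi}^{\nu}$ coming from $\norme{c_{\pi}(h)^{-1}}_{op}^{2}$ and keeps the shape contribution of lower order than the Gaussian variance, which is what ultimately drives the rate.
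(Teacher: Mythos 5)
Your proposal is correct and follows essentially the same route as the paper: a bias--variance decomposition in the Fourier domain, the bias bounded by $A^{2}T^{-s}$, the Gaussian variance bounded via Assumption~\ref{hregu} and the Weyl-type estimate of Proposition~\ref{somdpi}, and the same choice of $T_{n}$ to balance the two terms. Your treatment of the shape-variability term is in fact slightly sharper than the paper's (using unitarity of $\pi(\tau_{1}^{-1})$ and the Sobolev norm of $f^{\star}$ to make it $\bigO{A^{2}/n}$, whereas the paper bounds $\norme{c_{\pi}(f^{\star})}_{F}^{2}$ uniformly and absorbs it at the same order $T^{\nu+\dim(G)/2}/n$ as the Gaussian term), but this refinement does not change the decomposition, the key lemmas, or the resulting rate.
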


Therefore, under Assumption \ref{hregu} on the density $h$, Theorem \ref{th:upperbound} shows that   the quadratic risk $R(\hat{f}_{T_{n}},f^{\star})$ is of polynomial order of the sample size $n$, and that this rate deteriorates as the smoothness $\nu$ of $h$ increases. The fact that estimating $f^{\star}$ becomes harder with larger of $\nu$ (the so-called degree of ill-posedness) is well known in standard deconvolution problems  (see e.g.\ \cite{MR1126324} and references therein). Hence, Theorem \ref{th:upperbound} shows that a similar phenomenon holds in model \eqref{modele4} when using the  deconvolution step \eqref{eq:deconv}.  The rate of convergence $n^{-\frac{2s}{2s+2\nu+\dim(G)}}$ corresponds to the minimax rate in model \eqref{modelKK} for the problem of stochastic deconvolution over Lie groups as described in \cite{MR2446754}.

Then, thanks to the Theorem \ref{th:lowerbound} below, there  exists a connection between mean pattern estimation in the setting of Grenander's pattern theory   \cite{Gre,gremil} and the analysis of deconvolution problems in nonparametric statistics. Indeed, in the following theorem, we derive an asymptotic lower bound on $H_s(G,A)$ for the minimax risk $\R(A,s)$ which shows that the rate of convergence $n^{-\frac{2s}{2s+2\nu+\dim(G)}}$ cannot be improved. Thus, $\hat{f}_{T_{n}}$ is an optimal estimator of $f^{\star}$ in the minimax sense.

\begin{thm}\label{th:lowerbound}
Suppose that Assumption \ref{hregu} holds. Let $s > 2  \nu +  \dim G$. Then, there exists a constant $K_{2}>0$ such that
$$\liminf_{n\to\infty} \inf_{\hat{f} \in\LL^2(G)}  \sup_{f^{\star}\in H_s(G,A)}n^{\frac{2s}{2s+2\nu+\dim G}}R(\hat{f},f^{\star})\geq K_{2}.$$
\end{thm}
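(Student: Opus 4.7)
The plan is to establish the lower bound via Assouad's cube technique, adapted to the mixture structure introduced by the random group action in model \eqref{modele4}. Fix a frequency cutoff $T=T_n$ to be chosen later and let $\Lambda_T=\{\pi\in\widehat{G}\,:\,\lambda_\pi\asymp T\}$. The matrix coefficients $\phi_{\pi,k,l}=\sqrt{d_\pi}\,\pi_{k,l}$ (with $\pi\in\Lambda_T$ and $1\leq k,l\leq d_\pi$) form an orthonormal family in $\LL^2(G)$; by the Peter--Weyl theorem and Weyl's asymptotic for the eigenvalues of the Laplace--Beltrami operator (as recalled in the appendix), the cardinality of this family is $N_T=\sum_{\pi\in\Lambda_T}d_\pi^2\asymp T^{\dim(G)/2}$. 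For $\theta\in\{0,1\}^{N_T}$ and amplitude $\delta>0$, set $f_\theta=\delta\sum_{\pi\in\Lambda_T}\sum_{k,l}\theta_{\pi,k,l}\phi_{\pi,k,l}$; by orthogonality and the localization $\lambda_\pi\asymp T$ the Sobolev norm satisfies $\|f_\theta\|_{H_s}^2\lesssim T^{s+\dim(G)/2}\delta^2$, so that $f_\theta\in H_s(G,A)$ as soon as $\delta^2\leq cA^2T^{-s-\dim(G)/2}$, while adjacent vertices $\theta\sim\theta'$ (differing in a single coordinate) produce the separation $\|f_\theta-f_{\theta'}\|^2=\delta^2$. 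Assouad's inequality applied to the $n$-fold product experiment then yields
$$\R(A,s)\;\geq\;\frac{N_T\,\delta^2}{8}\Bigl(1-\max_{\theta\sim\theta'}\|P_\theta^{\otimes n}-P_{\theta'}^{\otimes n}\|_{TV}\Bigr),$$
where $P_\theta$ is the law of one observation $Y_m$ under $f^\star=f_\theta$, and the task is reduced to showing that adjacent hypotheses remain statistically indistinguishable.

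The hard part is the control of this total variation, because each $P_\theta$ is a \emph{Gaussian mixture} over the group rather than a genuine Gaussian measure. The naive convexity bound obtained by conditioning on $\tau$ and using left-invariance of the Haar measure,
$$\mathrm{KL}(P_\theta^{\otimes n},P_{\theta'}^{\otimes n})\;\leq\;n\,\EE_{\tau\sim h}\,\mathrm{KL}\!\left(\N(f_\theta\circ\tau^{-1},\varepsilon^2),\N(f_{\theta'}\circ\tau^{-1},\varepsilon^2)\right)\;\leq\;\frac{n}{2\varepsilon^2}\|f_\theta-f_{\theta'}\|^2,$$
does not detect the smoothness index $\nu$ and would only yield the suboptimal rate $n^{-2s/(2s+\dim(G))}$. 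The remedy is to expand the mixture likelihood ratio in the Fourier domain and compute a second-moment (chi-squared or Hellinger) divergence. The $\tau_m$'s enter only through the averaged matrices $\EE[\pi(\tau_m^{-1})]=c_\pi(h)$, and a direct calculation shows that the leading-order divergence between $P_\theta$ and $P_{\theta'}$ is controlled not by $\|f_\theta-f_{\theta'}\|^2$ but by
$$\|(f_\theta-f_{\theta'})\star h\|^2\;=\;\sum_{\pi\in\Lambda_T}d_\pi\,\|c_\pi(f_\theta-f_{\theta'})\,c_\pi(h)\|_F^2\;\leq\;C_2\,T^{-\nu}\|f_\theta-f_{\theta'}\|^2\;\leq\;C_2\,T^{-\nu}\delta^2,$$
where the penultimate inequality uses $\|c_\pi(h)\|_{op}^2\leq C_2\lambda_\pi^{-\nu}$ from Assumption \ref{hregu} together with $\lambda_\pi\asymp T$. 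Consequently the total variation between adjacent hypotheses stays bounded away from $1$ as long as $nT^{-\nu}\delta^2\leq c\varepsilon^2$.

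Finally, equating the Sobolev ceiling $\delta^2\asymp T^{-s-\dim(G)/2}$ against the indistinguishability constraint $\delta^2\asymp T^\nu/n$ fixes the optimal scale $T_n\asymp n^{2/(2s+2\nu+\dim(G))}$ and, substituted into the Assouad bound, produces
$$\R(A,s)\;\gtrsim\;N_{T_n}\,\delta^2\;\asymp\;T_n^{-s}\;\asymp\;n^{-2s/(2s+2\nu+\dim(G))},$$
as claimed. The delicate step is therefore the Fourier-domain second-moment computation that replaces $\|f_\theta-f_{\theta'}\|^2$ by $\|(f_\theta-f_{\theta'})\star h\|^2$ in the divergence bound: the extra factor $T^{-\nu}$ gained there is precisely what converts the naive rate into the sharp rate stated in Theorem \ref{th:lowerbound}.
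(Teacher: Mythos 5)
Your skeleton coincides with the paper's: an Assouad cube built on the matrix coefficients at frequencies $\lambda_\pi\asymp D$ with $D\asymp n^{2/(2s+2\nu+\dim G)}$, amplitude calibrated by the Sobolev constraint ($\mu_D\asymp D^{-s-\dim G/2}$), and the count $\sum_{\pi}d_\pi^2\asymp D^{\dim G/2}$ from the Weyl-type asymptotics; your observation that the naive conditional-KL bound loses the factor $\nu$ is also the right diagnosis. The gap is exactly at the step you yourself call delicate: the claim that the divergence between the two Gaussian \emph{mixtures} $P_\theta^{\otimes n}$ and $P_{\theta'}^{\otimes n}$ is, to leading order, controlled by $n\|(f_\theta-f_{\theta'})\star h\|^2$ is asserted without proof, and it is not a correct general statement. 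A mixture over random group elements still carries second-moment information about $f^\star$: the likelihood ratio expansion contains quadratic terms (in the paper's notation, the terms involving $|Z_m^{(2)}|^2$, $|\tilde Z_m^{(2)}|^2$ and the cross terms $Z_m^{(1)}Z_m^{(2)}$) which do not vanish when $c_\pi(h)$ is small; their expectations are needed to cancel the deterministic energy terms, and their fluctuations are only negligible when $n\mu_D^2 d_\pi^4\to 0$ and $n\mu_D^{3/2}d_\pi^3\to 0$, i.e.\ relations \eqref{eq:asympsetting}. This is precisely where the standing hypothesis $s>2\nu+\dim G$ is used in the paper's Proposition \ref{prop:final}, and your argument never invokes that hypothesis anywhere --- a sign that the decisive computation has been skipped rather than performed. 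Without carrying out this expansion (or an equivalent $\chi^2$/Hellinger computation between the mixtures, which is the same work in different clothing), the assertion that the total variation stays bounded away from $1$ whenever $nT^{-\nu}\delta^2\lesssim\varepsilon^2$ is unsupported.

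A secondary but related issue is your choice of a $\{0,1\}$-valued cube. Adjacent vertices then have \emph{unequal} energies, so the conditional log-likelihood ratio contains the deterministic term $n\delta^2/(2\varepsilon^2)$, which diverges at your calibration (it is of order $n^{2\nu/(2s+2\nu+\dim G)}$) and must be cancelled exactly by the expectation of the quadratic-in-noise terms before any smallness argument can start. The paper avoids this complication by taking symmetric sign flips $w_{\pi,kl}\in\{-d_\pi^{-1/2},d_\pi^{-1/2}\}$, so that $\|w_\pi\|_F^2=\|\tilde w_\pi\|_F^2$ and the energy terms cancel identically, leaving only the convolved-difference term \eqref{eq:A7} (of size $n\mu_D D^{-\nu}=\bigO{1}$, which is where Assumption \ref{hregu} and the $T^{-\nu}$ gain genuinely enter) plus fluctuation terms \eqref{eq:A1}--\eqref{eq:A6} that are shown to be $\bigOp{1}$ or $\smallOp{1}$. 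To repair your proof you would need to either switch to the symmetric cube and reproduce this expansion, or else prove a genuine mixture-divergence bound that accounts for the second-moment channel; as written, the conclusion does not follow from the steps given.
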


\begin{appendix}

\section{Technical Appendix}

\subsection{Proof of Theorem \ref{th:upperbound}}


By the classical bias/variance decomposition of the risk one has
$$R(\hat{f}_T^{\star},f^{\star})=\EE\left(\norme{\hat{f}_T^{\star}-\EE\left(\hat{f}_T^{\star}\right)}^2\right)+\norme{\EE\left(\hat{f}_T^{\star}\right) -f^{\star}}^2.$$
Let us first give an upper bound for the bias $\norme{\EE\left(\hat{f}_T^{\star}\right)-f^{\star}}^2$. By linearity of the trace operator and by inverting expectation and sum (since $\Card(\widehat{G}_T)$ is finite) one obtains that
\begin{eqnarray*}
\norme{\EE\left(\hat{f}_T^{\star}\right)-f^{\star}}^2 &=& \norme{\sum_{\pi\in\widehat{G}_T}d_{\pi}\Tr\left[\pi(g)\left(\frac{1}{n}\sum_{m=1}^n\EE\left(c_{\pi}(Y_m)\right)c_{\pi}(h)^{-1}-c_{\pi}(f^{\star})\right)\right] \right. \\
& & \left. -\sum_{\pi\in\widehat{G}\setminus\widehat{G}_T}d_{\pi}\Tr\left[\pi(g)c_{\pi}(f^{\star})\right]}^2.
\end{eqnarray*}
Since the $(c_{\pi}(Y_m))_m$'s are i.i.d.\ random variables and  $\EE(c_{\pi}(Y_m))=\EE(c_{\pi}(f_m))=c_{\pi}(f^{\star})c_{\pi}(h)$ we obtain that$
\norme{\EE\left(\hat{f}_T^{\star}-f^{\star}\right)}^2 =\norme{\sum_{\pi\in\widehat{G}\setminus\widehat{G}_T}d_{\pi}\Tr\left[\pi(g)c_{\pi}(f^{\star})\right]}^2.
$
Then, by Theorem \ref{persevalie}, one has that
$\norme{\EE\left(\hat{f}_T^{\star}-f^{\star}\right)}^2 = \sum_{\pi\in\widehat{G}\setminus\widehat{G}_T}d_{\pi}\Tr\left[c_{\pi}(f^{\star})\overline{c_{\pi}(f^{\star})}^t\right].$
Finally since $\pi\notin\widehat{G}_T$ and $\norme{f}_{H_s}^2\leq A^2$ we obtain the following upper bound for the bias
\begin{equation}
\norme{\EE\left(\hat{f}_T^{\star}-f^{\star}\right)}^2 \leq T^{-s}A^2. \label{eq:boundbias}
\end{equation}
Let us now compute an upper bound for the variance term $\EE\left(\norme{\hat{f}_T^{\star}-\EE\left(\hat{f}_T^{\star}\right)}^2\right)$.
\begin{eqnarray*}
\EE\left(\norme{\hat{f}_T^{\star}-\EE\left(\hat{f}_T^{\star}\right)}^2\right) &=& \EE\left(\norme{\sum_{\pi\in\widehat{G}_T}d_{\pi}\Tr\left[\pi(g)\left(\frac{1}{n}\sum_{m=1}^nc_{\pi}(Y_m)c_{\pi}(h)^{-1}-c_{\pi}(f^{\star})\right)\right]}^2\right)\\
& \leq & 2\underbrace{\EE\left(\norme{\frac{1}{n}\sum_{m=1}^n\sum_{\pi\in\widehat{G}_T}d_{\pi}\Tr\left[\pi(g)\left(c_{\pi}(f_m)c_{\pi}(h)^{-1}-c_{\pi}(f^{\star})\right)\right]}^2\right)}_{E_1}\\
& + & 2\underbrace{\EE\left(\norme{\varepsilon\sum_{\pi\in\widehat{G}_T}d_{\pi}\Tr\left[\pi(g)\frac{1}{n}\sum_{m=1}^nc_{\pi}(W_m)c_{\pi}(h)^{-1}\right]}^2\right)}_{E_2},
\end{eqnarray*}
using that $c_{\pi}(Y_m)=c_{\pi}(f_m)+\varepsilon c_{\pi}(W_m)$.

Let us first consider the term $E_2$. By Theorem \ref{persevalie} and by decomposing the trace
\begin{eqnarray*}
E_2 &=& \varepsilon^2\EE\left(\sum_{\pi\in\widehat{G}_T}d_{\pi}\Tr\left[\frac{1}{n^2}\sum_{m,m'=1}^nc_{\pi}(W_m)c_{\pi}(h)^{-1}\overline{c_{\pi}(W_m')c_{\pi}(h)^{-1}}^t\right]\right)\\
&=& \varepsilon^2\EE\left(\sum_{\pi\in\widehat{G}_T}d_{\pi}\frac{1}{n^2}\sum_{m,m'=1}^n\sum_{k,j=1}^{d_{\pi}}\left(c_{\pi}(W_m)c_{\pi}(h)^{-1}\right)_{kj}\left(\overline{c_{\pi}(W_m')c_{\pi}(h)^{-1}}\right)_{kj}\right)\\
&=& \varepsilon^2\EE\left(\sum_{\pi\in\widehat{G}_T}d_{\pi}\frac{1}{n^2}\sum_{m,m'=1}^n\sum_{k,j=1}^{d_{\pi}}\sum_{i,i'=1}^{d_{\pi}}(c_{\pi}(W_m))_{ki}(c_{\pi}(h)^{-1})_{ij}\overline{(c_{\pi}(W_m')_{k,i'}}\,\overline{((c_{\pi}(h)^{-1})_{i'j}}\right).\\
\end{eqnarray*}
By the Fubini-Tonneli theorem, we can invert sum and integral, and since $\left(((c_{\pi}(W_m))_{kl}\right)_{k,l}$ are i.i.d.\ Gaussian variables with zero expectation and variance $d_{\pi}^{-1}$, it follows that
\begin{eqnarray*}
E_2 &=& \varepsilon^2\sum_{\pi\in\widehat{G}_T}d_{\pi}\frac{1}{n^2}\sum_{m=1}^n\sum_{k,j,i=1}^{d_{\pi}}(c_{\pi}(h)^{-1})_{ij}\overline{(c_{\pi}(h)^{-1})_{ij}}\EE\left((c_{\pi}(W_m))_{ki}(\overline{(c_{\pi}(W_m))_{k,i}}\right)\\
&=& \frac{\varepsilon^2}{n}\sum_{\pi\in\widehat{G}_T}d_{\pi}\sum_{k,j,i=1}^{d_{\pi}}|(c_{\pi}(h)^{-1})_{ij}|^2d_{\pi}^{-1} = \frac{\varepsilon^2}{n}\sum_{\pi\in\widehat{G}_T}d_{\pi}\sum_{j,i=1}^{d_{\pi}}|(c_{\pi}(h)^{-1})_{ij}|^2
\end{eqnarray*}
Then thanks to the properties of the operator norm, one has $\sum_{j=1}^{d_{\pi}}|(c_{\pi}(h)^{-1})_{ij}|^2 \leq \norme{c_{\pi}(h)^{-1}}_{op}^2$, and therefore
\begin{equation}
E_2 \leq \frac{\varepsilon^2}{n}\sum_{\pi\in\widehat{G}_T}d_{\pi}^2\norme{c_{\pi}(h)^{-1}}_{op}^2. \label{eq:boundE2}
\end{equation}
Let us now compute an upper bound for $E_1$. Since $c_{\pi}(f_m)=c_{\pi}(f^{\star})\pi(\tau_{m}^{-1})$ and by Theorem \ref{persevalie},
\begin{eqnarray*}
E_1 &=& \EE\left(\norme{\sum_{\pi\in\widehat{G}_T}d_{\pi}\Tr\left[\pi(g)\left(c_{\pi}(f^{\star})\frac{1}{n}\sum_{m=1}^n\pi(\tau_m^{-1})c_{\pi}(h)^{-1}-c_{\pi}(f^{\star})\right)\right]}^2\right)\\
&=& \EE\left(\sum_{\pi\in\widehat{G}_T}d_{\pi}\norme{(c_{\pi}(f^{\star})\frac{1}{n}\sum_{m=1}^n\pi(\tau_m^{-1})c_{\pi}(h)^{-1}-c_{\pi}(f^{\star})}_F^2\right).
\end{eqnarray*}
By Fubini-Tonelli theorem, we invert sum and integral, and since the random variables $\tau_m$ are i.i.d.\
\begin{eqnarray*}
E_1 &=& \frac{1}{n}\sum_{\pi\in\widehat{G}_T}d_{\pi}\EE\left(\norme{c_{\pi}(f^{\star})\pi(\tau_1^{-1})c_{\pi}(h)^{-1}-c_{\pi}(f^{\star})}_F^2\right)\\
&=& \frac{1}{n}\sum_{\pi\in\widehat{G}_T}d_{\pi}\EE\left(\norme{c_{\pi}(f^{\star})\pi(\tau_1^{-1})c_{\pi}(h)^{-1}}_F^2+\norme{c_{\pi}(f^{\star})}_F^2-2\Tr\left[c_{\pi}(f^{\star})\pi(\tau_1^{-1})c_{\pi}(h)^{-1}\overline{c_{\pi}(f^{\star})}^t\right]\right),
\end{eqnarray*}
where the last equality follows by definition of the Frobenius norm. Now remark that, 
\begin{eqnarray*}
\EE\left(\Tr\left[c_{\pi}(f^{\star})\pi(\tau_1^{-1})c_{\pi}(h)^{-1}\overline{c_{\pi}(f^{\star})}^t\right]\right) &=& \Tr\left[c_{\pi}(f^{\star})\EE(\pi(\tau_1^{-1}))c_{\pi}(h)^{-1}\overline{c_{\pi}(f^{\star})}^t\right]\\
&=& \Tr\left[c_{\pi}(f^{\star})c_{\pi}(h)c_{\pi}(h)^{-1}\overline{c_{\pi}(f^{\star})}^t\right]\\
&=& \norme{c_{\pi}(f^{\star})}_F^2,
\end{eqnarray*}
and let us compute $\EE\left(\norme{c_{\pi}(f^{\star})\pi(\tau_1^{-1})c_{\pi}(h)^{-1}}_F^2\right)$. Recall that
$$
\norme{PQ}_F \leq \norme{P}_{F}\norme{Q}_{op}
$$
 for any $P,Q\in\M_{d_{\pi}\times d_{\pi}}(\CC)$  and that the operator norm is a multiplicative norm, which implies that
\begin{eqnarray*}
\EE\left(\norme{c_{\pi}(f^{\star})\pi(\tau_1^{-1})c_{\pi}(h)^{-1}}_F^2\right) 
&=& \EE\left(\norme{c_{\pi}(f^{\star})}_F^2\norme{\pi(\tau_1^{-1})c_{\pi}(h)^{-1}}_{op}^2\right)\\
&=& \norme{c_{\pi}(f^{\star})}_F^2\EE\left(\norme{\pi(\tau_1^{-1})}_{op}^2\right)\norme{c_{\pi}(h)^{-1}}_{op}^2,
\end{eqnarray*}
Since the operator norm is the smallest matrix norm one has that
$\EE\left(\norme{\pi(\tau_1^{-1})}_{op}^2\right) \leq \EE\left(\norme{\pi(\tau_1^{-1})}_F^2\right).$
Now since
$\norme{\pi(\tau_1^{-1})}_F^2=\Tr\left[\pi(\tau_1^{-1})\overline{\pi(\tau_1^{-1})}^t\right]=\Tr\left[\pi(\tau_1^{-1})\pi(\tau_1)\right]=\Tr[\Id_{d_{\pi}}],$
it follows that
$\EE\left(\norme{\pi(h_1^{-1})}_{op}^2\right)\leq d_{\pi},$
and therefore
\begin{equation}
E_1 \leq \frac{1}{n}\sum_{\pi\in\widehat{G}_T}d_{\pi}\norme{c_{\pi}(f^{\star})}_F^2\left(d_{\pi}\norme{c_{\pi}(h)^{-1}}_{op}^2-1\right).  \label{eq:boundE1}
\end{equation}
Thus, combining the bounds \eqref{eq:boundE2} and \eqref{eq:boundE1} 
\begin{eqnarray*}
\EE\left(\norme{\hat{f}_T^{\star}-\EE\left(\hat{f}_T^{\star}\right)}^2\right) & \leq & \frac{2}{n}\sum_{\pi\in\widehat{G}_T}d_{\pi}^2\left(\norme{c_{\pi}(f^{\star})}_F^2\left(\norme{c_{\pi}(h)^{-1}}_{op}^2-\frac{1}{d_{\pi}}\right)+\varepsilon^2 \norme{c_{\pi}(h)^{-1}}_{op}^2\right)  \\
& \leq & \frac{2}{n}\sum_{\pi\in\widehat{G}_T}d_{\pi}^2\norme{c_{\pi}(h)^{-1}}_{op}^2\left(\norme{c_{\pi}(f^{\star})}_F^2+\varepsilon^2 \right). 
\end{eqnarray*}
Since $f^{\star}\in H_s(G,A)$, this implies that $\norme{c_{\pi}(f^{\star})}_{F}^2 \leq M$, for some constant $M$ that is independent of $\pi$ and $f^{\star}$. 
Hence $\norme{c_{\pi}(f^{\star})}_F^2+\varepsilon^2 \leq (M + \varepsilon^2)$. Assumption \ref{hregu} on the smoothness of $h$ thus implies 
\begin{eqnarray}
\EE\left(\norme{\hat{f}_T^{\star}-\EE\left(\hat{f}_T^{\star}\right)}^2\right) & \leq & \frac{2C_{1}(M + \varepsilon^2)}{n} \sum_{\pi\in\widehat{G}_T} d_{\pi}^2\lambda_{\pi}^{\nu} \leq  \frac{2C_{1}(M + \varepsilon^2)}{n}T^{\nu}\sum_{\pi\in\widehat{G}_T}d_{\pi}^2 \nonumber \\
& \leq & \frac{C}{n}T^{\nu + (\dim(G)/2)}, \label{eq:boundvar}
\end{eqnarray}
where the last inequality follows by Proposition \ref{somdpi}, and $C > 0$ is some constant that is independent of $f^{\star}\in H_s(G,A)$. Therefore, combining the bounds \eqref{eq:boundbias} and \eqref{eq:boundvar} it follows that
$R(\hat{f}_T^{\star},f^{\star})  \leq L(T)$ where $ L(T) = T^{-s}A^2 + \frac{C}{n}T^{\nu + (\dim(G)/2)}$ (note that  $L(T)$ does not depend on $f^{\star} \in H_s(G,A)$). Let us now search among the estimators  $(\hat{f}_T^{\star})_T$ the ones which minimize the upper bound of the quadratic risk. It is clear that the function $T \mapsto L(T)$ has a minimum at  $T = \lfloor n^{\frac{2}{2s+2\nu+\dim(G)}}\rfloor$ such that
$
L(\lfloor n^{\frac{2}{2s+2\nu+\dim(G)}}\rfloor)  \leq  A^2n^{\frac{-2s}{2s+2\nu+\dim(G)}} +\frac{C}{n}n^{\frac{2\nu + \dim(G)}{2s+2\nu+\dim(G)}} \leq  C''n^{\frac{-2s}{2s+2\nu+\dim(G)}}.
$
which completes the proof of Theorem \ref{th:upperbound}. \hfill $\square$


\subsection{Proof of Theorem \ref{th:lowerbound}}


 To obtain a lower bound, we use an adaptation of the Assouad’s cube technique (see e.g.\ \cite{MR2724359} and references therein) to model \eqref{modele4} which differs from the standard white noise models classically studied in nonparametric statistics. Note that for any subset $\Omega \subset H_s(G,A)$  
$$\inf_{\hat{f} }\sup_{f^{\star}\in H_s(G,A)}R(\hat{f},f^{\star})\geq \inf_{\hat{f}  }\sup_{f^{\star}\in \Omega} R(\hat{f},f^{\star}).$$
The main idea is to find an appropriate subset $\Omega$ of test functions that will allow us to compute an asymptotic lower bound for $ \inf_{\hat{f} }\sup_{f^{\star}\in \Omega} R(\hat{f}^{\star},f^{\star})$ and thus the result of Theorem \ref{th:lowerbound} will immediately follow by the above inequality.

\subsubsection{Choice of  a subset $\Omega$ of test functions}

Let us consider a set $\Omega$  of the following form:
$$\Omega = \Omega_D=\left\{f_w^* : G \to \CC :  \forall g\in G, f_w^*(g)=\sqrt{\mu_D} \sum_{\pi\in \widehat{G}_D} d_{\pi} \sum_{k,l}^{d_{\pi}}w_{\pi,kl}(\pi(g))_{kl},\ w_{\pi,kl}\in\{-d_{\pi}^{-1/2},d_{\pi}^{-1/2}\} \right\},$$
where $\widehat{G}_D=\left\{\pi\in\widehat{G}\ :\ D\leq\lambda_{\pi}< 2D\right\}$ and $\mu_D\in\RR_+$. To simplify the presentation of the proof, we will write $f_{w} = f_w^*$. Let $\tilde{\Omega} = \prod_{\pi \in \widehat{G}_D} \{-d_{\pi}^{-1/2},d_{\pi}^{-1/2}\}^{ d_{\pi}^{2}}$. In what follows, the notation $w = (w_{\pi,kl})_{\pi \in \widehat{G}_D, 1 \leq k,l  \leq d_{\pi}} \in \tilde{\Omega}$ is used to denote the set of coefficients $w_{\pi,kl}$ taking their value in $\{-d_{\pi}^{-1/2},d_{\pi}^{-1/2}\}$. The notation $\EE_w$ will be used to denote   expectation  with respect to the distribution $\PP_w$ of the random processes $Y_{m},m \in[\![1,n]\!]$ in model \eqref{modele4} under the hypothesis that $f^{\star} = f_w$.

Note that any $f_w\in\Omega$ can be written as $f_w(g)=\sqrt{\mu_D} \sum_{\pi\in \widehat{G}_D} d_{\pi} \Tr\left[\pi(g)w_{\pi}\right]$, where $w_{\pi}=(w_{\pi,kl})_{1 \leq k,l  \leq d_{\pi}}$. Let $|\Omega|=\Card(\Omega)$ and let us search for a condition on  $\mu_D$ such that  $\Omega\subset H_s(G,A)$. Note that $c_{\pi}(f_w) = \sqrt{\mu_{D}} w_{\pi}$ which implies
\begin{eqnarray*}
f_w\in H_s(G,A) & \iff & \norme{f_w}_{H_s}^2\leq A^2\\
& \iff & \sum_{\pi\in\widehat{G}_D} d_{\pi}\Tr\left[\sqrt{\mu_D}w_{\pi}\overline{\sqrt{\mu_D}w_{\pi}}^t\right] +\sum_{\pi\in\widehat{G}_D}\lambda_{\pi}^sd_{\pi}\Tr\left[\sqrt{\mu_D}w_{\pi}\overline{\sqrt{\mu_D}w_{\pi}}^t\right] \leq A^2\\
& \iff & \sum_{\pi\in\widehat{G}_{D}}(1+\lambda_{\pi}^s) \mu_Dd_{\pi}^2 \leq A^2,
\end{eqnarray*}
using the equality  $\Tr\left[w_{\pi}\overline{w_{\pi}}^t\right]=\sum_{k,l=1}^{d_{\pi}}w_{\pi,kl}^2 = d_{\pi}$ which follows from the fact that $|w_{\pi,kl}| = d_{\pi}^{-1/2}$. Since $\pi\in\widehat{G}_D$, one has that $\lambda_{\pi}< 2D$, and thus
$\mu_D\sum_{\pi\in\widehat{G}_{D}}d_{\pi}^2 \leq 2^{-s}D^{-s}A^2/2 \Longrightarrow \sum_{\pi\in\widehat{G}_{D}}(1+\lambda_{\pi}^s)\mu_Dd_{\pi}^2 \leq A^2$. Moreover by Proposition \ref{somdpi} we have that for $D$ sufficiently large, $\sum_{\pi\in\widehat{G}_{D}}d_{\pi}^2\leq C D^{\dim G/2}$, for some constant $C>0$, and therefore for such a $D$, it follows that
$\mu_D \leq 2^{-s} D^{-s-\dim G/2}(A^2/2)C^{-1} \Longrightarrow \mu_D\sum_{\pi\in\widehat{G}_{D}}d_{\pi}^2 \leq 2^{-s}  D^{-s} A^2/2.$ Hence, there exists a sufficiently large $D_{0}$ such that for all $D \geq D_{0}$ the condition $\mu_D \leq K D^{-s-\dim G/2}$ for some $K > 0$ (independent of $D$) implies that $\Omega\subset H_s(G,A)$. In what follows, we thus assume that 
$ \mu_D = \kappa D^{-s-\dim G/2}$
for some $0 \leq \kappa \leq K$ and $D \geq D_{0}$.

\subsubsection{Minoration of the quadratic risk over $\Omega$}\label{minorationrisqueparti}

Note that the supremum over $\Omega$ of the quadratic risk of any estimator $\hat{f}$ can be bounded from below as follows. First, remark that by Theorem \ref{persevalie}
\begin{eqnarray}
\sup_{f_w\in\Omega}R(\hat{f},f_w)   & = & \sup_{f_w\in\Omega}  \EE_w\left(\norme{\hat{f}-f_w}^2 \right) \nonumber \\
&  \geq & \sup_{w \in \tilde\Omega }  \sum_{\pi\in\widehat{G}_D}d_{\pi}\sum_{k,l=1}^{d_{\pi}}\EE_w\left(\left|(c_{\pi}(\hat{f}))_{kl}-\sqrt{\mu_D}w_{\pi,kl}\right|^2\right) \nonumber \\
& \geq & \frac{1}{|\tilde\Omega|}    \sum_{w \in \tilde\Omega}   \sum_{\pi\in\widehat{G}_D}d_{\pi}     \sum_{k,l=1}^{d_{\pi}}\EE_w\left(\left|(c_{\pi}(\hat{f}))_{kl}-\sqrt{\mu_D}w_{\pi,kl}\right|^2\right) \nonumber \\
& = &  \frac{1}{|\tilde\Omega|}       \sum_{\pi\in\widehat{G}_D}d_{\pi}     \sum_{k,l=1}^{d_{\pi}} \sum_{w \in \tilde\Omega} \EE_w\left(\left|(c_{\pi}(\hat{f}))_{kl}-\sqrt{\mu_D}w_{\pi,kl}\right|^2\right)   \label{eq:sup}
\end{eqnarray}
with $|\Omega| = 2^{\sum_{\pi\in\widehat{G}_D}  d_{\pi}^{2}}$.
Now, define for all $\pi\in\widehat{G}_D,\ k,l\in[\![1,d_{\pi}]\!]$ the coefficients
$$
w_{\pi,kl}^*=\argm_{v\in\left\{-d_{\pi}^{-1/2},d_{\pi}^{-1/2}\right\}}\left|(c_{\pi}(\hat{f}))_{kl}-\sqrt{\mu_D}v\right|.
$$
The inequalities
\begin{eqnarray*}
\sqrt{\mu_D}\left|w_{\pi,kl}-w_{\pi,kl}^*\right| & \leq & \left|\sqrt{\mu_D}w_{\pi,kl}-(c_{\pi}(\hat{f}))_{kl}\right|+\left|\sqrt{\mu_D}w_{\pi,kl}^*-(c_{\pi}(\hat{f}))_{kl}\right|\\
& \leq & 2\left|\sqrt{\mu_D}w_{\pi,kl}-(c_{\pi}(\hat{f}))_{kl}\right|,
\end{eqnarray*}
imply that
$\frac{1}{4}\mu_D\left|w_{\pi,kl}-w_{\pi,kl}^*\right|^2\leq \left|\sqrt{\mu_D}w_{\pi,kl}-(c_{\pi}(\hat{f}))_{kl}\right|^2,$
and thus by inequality \eqref{eq:sup}
\begin{eqnarray}
\sup_{f_w\in\Omega}R(\hat{f},f) & \geq & \frac{\mu_D}{4|\tilde\Omega|}       \sum_{\pi\in\widehat{G}_D}d_{\pi}     \sum_{k,l=1}^{d_{\pi}} \sum_{w \in \tilde\Omega} \EE_w\left(\left|w_{\pi,kl}-w_{\pi,kl}^*\right|^2\right) \nonumber \\
& \geq &  \frac{\mu_D}{4|\tilde\Omega|}       \sum_{\pi\in\widehat{G}_D}d_{\pi}     \sum_{k,l=1}^{d_{\pi}} \sum_{\substack{ w \in \tilde\Omega \\w_{\pi,kl}=d_{\pi}^{-1/2}} }  \EE_w\left(\left|w_{\pi,kl}-w_{\pi,kl}^*\right|^2\right) \nonumber \\
& & +\EE_{w^{(\pi,kl)}}\left(\left|w_{\pi,kl}^{(\pi,kl)}-w_{\pi,kl}^*\right|^2\right),  \label{minrisque1} 
\end{eqnarray}
where for all $\pi'\in\widehat{G}_D,\ k',l'\in[\![1,d_{\pi}]\!]$, we define
$$w^{(\pi,kl)} = (w_{\pi',k'l'}^{(\pi,kl)}) \text{ is such that }\left\{\begin{array}{ll}
w_{\pi',k'l'}^{(\pi,kl)}= w_{\pi',k'l'} & \text{if } \pi'\neq \pi\text{ or }(k',l') \neq (k,l) \\
\\
w_{\pi',k'l'}^{(\pi,kl)}=-  w_{\pi,kl}  & \text{if } \pi'=\pi \text{ and } (k',l') = (k,l) \\
\end{array} \right..$$
 Note that the above minoration depends on $\hat{f}$. Let us introduce the notation $$C_{\pi,kl} :=\EE_w\left(\left|w_{\pi,kl}-w_{\pi,kl}^*\right|^2\right)+\EE_{w^{(\pi,kl)}}\left(\left|w_{\pi,kl}^{(\pi,kl)}-w_{\pi,kl}^*\right|^2\right).$$
In what follows, we show that $C_{\pi,kl}$ can be bounded from below independently  of $\hat{f}$.

\subsubsection{A lower bound for $C_{\pi,kl}$}

Let $\pi\in\widehat{G}_D$, $k,l\in[\![1,d_{\pi}]\!]$ be fixed. Denote by $X=(c_{\pi}(Y_m))_{(\pi,m)\in\widehat{G}\times[\![1,n]\!]}$ the data set in the Fourier domain. In what follows, the notation $\EE_{w,\tau}$ is used to denote expectation with respect to the distribution $\PP_{w,\tau}$ of the random processes $Y_{m},m \in[\![1,n]\!]$ in model \eqref{modele4}  conditionally to $\tau = (\tau_1,\ldots,\tau_n)$ and under the hypothesis that $f^{\star} = f_w$. The notation $w=0$ is used to denote the hypothesis $f^{\star} = 0$ in model \eqref{modele4}. Therefore, using these notations, one can write that
\begin{eqnarray*}
C_{\pi,kl} &=& \int_{G^n} \left[\EE_{w,\tau}\left(\left|w_{\pi,kl}-w_{\pi,kl}^*\right|^2\right)+\EE_{w,\tau}\left(\left|w_{\pi,kl}^{(\pi,kl)}-w_{\pi,kl}^*\right|^2\right)\right]h(\tau_1)...h(\tau_n)\ud \tau_1...\ud\tau_n, \\
&=&\int_{G^n}\EE_{0,\tau}\left(\left|w_{\pi,kl}-w_{\pi,kl}^*\right|^2\frac{\ud\PP_{w,\tau}}{\ud \PP_{0,\tau}}(X)+\left|w_{\pi,kl}^{(\pi,kl)}-w_{\pi,kl}^*\right|^2\frac{\ud\PP_{w^{(\pi,kl)},\tau}}{\ud \PP_{0,\tau}}(X)\right)h(\tau_1)...h(\tau_n)\ud \tau_1...\ud\tau_n\\
&=& \int_{G^n}\EE_{0}\left(\left|w_{\pi,kl}-w_{\pi,kl}^*\right|^2\frac{\ud\PP_{w,\tau}}{\ud \PP_{0}}(X)+\left|w_{\pi,kl}^{(\pi,kl)}-w_{\pi,kl}^*\right|^2\frac{\ud\PP_{w^{(\pi,kl)},\tau}}{\ud \PP_{0}}(X)\right)h(\tau_1)...h(\tau_n)\ud \tau_1...\ud\tau_n,
\end{eqnarray*}
where the last equality follows from the fact that, under the hypothesis $f^{\star} = 0$, the data $X$ in model \eqref{modele4} do not depend on $\tau$. By inverting sum and integral, and using Fubini-Tonneli theorem we obtain 
\begin{eqnarray*}
C_{\pi,kl} &=& \EE_0\left(\int_{G^n}\left(\left|w_{\pi,kl}-w_{\pi,kl}^*\right|^2\frac{\ud\PP_{w,\tau}}{\ud \PP_{0}}(X)+\left|w_{\pi,kl}^{(\pi,kl)}-w_{\pi,kl}^*\right|^2\frac{\ud\PP_{w^{(\pi,kl)},\tau}}{\ud \PP_{0}}(X)\right)h(\tau_1)...h(\tau_n)\ud \tau_1...\ud\tau_n\right)\\
&=& \EE_0\left(\left|w_{\pi,kl}-w_{\pi,kl}^*\right|^2\int_{G^n}\frac{\ud\PP_{w,\tau}}{\ud \PP_{0}}(X)h(\tau_1)...h(\tau_n)\ud \tau_1...\ud\tau_n\right.\\
&+& \left.\left|w_{\pi,kl}^{(\pi,kl)}-w_{\pi,kl}^*\right|^2\int_{G^n}\frac{\ud\PP_{w^{(\pi,kl)},\tau}}{\ud \PP_{0}}(X)h(\tau_1)...h(\tau_n)\ud \tau_1...\ud\tau_n\right).
\end{eqnarray*}
Introduce the notations
$$
Q(X)=\int_{G^n}\frac{\ud\PP_{w,\alpha}}{\ud \PP_{0}}(X)h(\alpha_1)...h(\alpha_n)\ud \alpha_1...\ud \alpha_n \mbox{ and } 
Q^{(\pi,kl)}(X)=\int_{G^n}\frac{\ud\PP_{w^{(\pi,kl)},\alpha}}{\ud \PP_{0}}(X)h(\alpha_1)...h(\alpha_n)\ud \alpha_1...\ud \alpha_n.
$$
Since $w_{\pi,kl}^{(\pi,kl)}-w_{\pi,kl}^* = -w_{\pi,kl}-w_{\pi,kl}^*$ with $w_{\pi,kl} \in \left\{ -d_{\pi}^{-1/2}, d_{\pi}^{-1/2} \right\}  $ and  $w_{\pi,kl}^* \in \left\{ -d_{\pi}^{-1/2}, d_{\pi}^{-1/2} \right\}  $, it follows that
\begin{eqnarray}\label{inegalite3}
C_{\pi,kl} & \geq & 4 d_{\pi}^{-1}\EE_0\left(\min\left(Q(X),Q^{(\pi,kl)}(X)\right)\right)\nonumber\\
& = & 4 d_{\pi}^{-1}\EE_0\left(Q(X)\min\left(1,\frac{Q^{(\pi,kl)}(X)}{Q(X)}\right)\right)\nonumber\\
& = & 4 d_{\pi}^{-1}\EE_0\left(\int_{G^n}\frac{\ud\PP_{w,\tau}}{\ud \PP_{0}}(X)h(\tau_1)...h(\tau_n)\ud \tau_1...\ud\tau_n\min\left(1,\frac{Q^{(\pi,kl)}(X)}{Q(X)}\right)\right)\nonumber\\
& = & 4 d_{\pi}^{-1}\int_{G^n}\EE_0\left(\frac{\ud\PP_{w,\tau}}{\ud \PP_{0}}(X)h(\tau_1)...h(\tau_n)\ud \tau_1...\ud\tau_n\min\left(1,\frac{Q^{(\pi,kl)}(X)}{Q(X)}\right)\right)\nonumber\\
& = & 4 d_{\pi}^{-1}\int_{G^n}\EE_{w,\tau}\left(\min\left(1,\frac{Q^{(\pi,kl)}(X)}{Q(X)}\right)\right)h(\tau_1)...h(\tau_n)\ud \tau_1...\ud\tau_n \nonumber\\
& = & 4 d_{\pi}^{-1} \EE_{w}\left(\min\left(1,\frac{Q^{(\pi,kl)}(X)}{Q(X)}\right)\right).
\end{eqnarray}
Let us now compute a lower bound for $\EE_{w}\left(\min\left(1,\frac{Q^{(\pi,kl)}(X)}{Q(X)}\right)\right)$. Note that for any $0 < \delta < 1$, 
\begin{equation} \label{eq:Markov}
 \EE_{w}\left(\min\left(1,\frac{Q^{(\pi,kl)}(X)}{Q(X)}\right)\right)  \geq \delta\PP_{w}\left(\frac{Q^{(\pi,kl)}(X)}{Q(X)}>\delta\right).
\end{equation}

\begin{prop}\label{prop:final}
Let $\pi\in\widehat{G}_D$, $k,l\in[\![1,d_{\pi}]\!]$ be fixed. Let $\mu_D = \kappa D^{-s-\dim G/2}$ and $D=\left\lfloor n^{\frac{2}{2s+2\nu+\dim G}}\right\rfloor$. Suppose that $s > 2  \nu +  \dim G$. Then, there exists $0 < \delta < 1$ and a constant $C>0$ such that
$$\liminf_{n\to\infty}\PP_{w}\left(\frac{Q^{(\pi,kl)}(X)}{Q(X)}>\delta\right)>C.$$
\end{prop}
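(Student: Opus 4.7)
My plan is to exploit the product structure of the laws under the two competing hypotheses, reducing the bound on $\PP_w(L>\delta)$ to a single-observation chi-squared estimate that is controlled by Assumption \ref{hregu}.

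Since $(W_m,\tau_m)_{m=1}^n$ are iid, the joint laws $\PP_w$ and $\PP_{w^{(\pi,kl)}}$ of the data $X=(c_\pi(Y_m))_{\pi,m}$ are $n$-fold product measures; write $\PP_w = (\PP_w^{(1)})^{\otimes n}$ and similarly for $w^{(\pi,kl)}$, where $\PP_w^{(1)}$ denotes the marginal law of a single observation obtained by integrating the Gaussian conditional law \eqref{modele4transfome} against $h(\tau)\ud\tau$. Setting $\psi_w := \ud\PP_w^{(1)}/\ud\PP_0^{(1)}$ and $L_m := \psi_{w^{(\pi,kl)}}(X_m)/\psi_w(X_m)$, the likelihood ratio factorizes as $L = Q^{(\pi,kl)}/Q = \prod_{m=1}^n L_m$ where the $L_m$'s are iid under $\PP_w$ with $\EE_w[L_m]=1$. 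The Paley--Zygmund inequality then gives, for any $\delta \in (0,1)$,
\[\PP_w(L>\delta) \geq \frac{(1-\delta)^2}{\EE_w[L^2]} = \frac{(1-\delta)^2}{(1+\chi_1^2)^n},\]
where $\chi_1^2 := \chi^2(\PP_{w^{(\pi,kl)}}^{(1)},\PP_w^{(1)}) = \EE_w[L_m^2]-1$. It therefore suffices to establish the uniform estimate $n\chi_1^2 = O(1)$, since then $\EE_w[L^2] \leq \exp(n\chi_1^2) = O(1)$ and choosing $\delta = 1/2$ concludes.

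To estimate $\chi_1^2$, use the Gaussian form of \eqref{modele4transfome} together with the unitarity of $\pi(\tau^{-1})$ to write $\psi_w(X_m) = e^{-B}\int_G \exp(a_\tau(X_m;w)) h(\tau)\ud\tau$ where $B = (\mu_D/\varepsilon^2)\sum_{\pi\in\widehat G_D} d_\pi^2$ is $\tau$-independent and identical for $w$ and $w^{(\pi,kl)}$, and $a_\tau(X_m;w)$ is linear in $X_m$ and proportional to $\sqrt{\mu_D}$. Taylor expanding $e^{a_\tau}$ to first order in the small parameter $\sqrt{\mu_D}$ and averaging over $\tau$ gives, to leading order, $\psi_{w^{(\pi,kl)}} - \psi_w \approx e^{-B}\bigl(\overline{a_\tau(\cdot;w^{(\pi,kl)})} - \overline{a_\tau(\cdot;w)}\bigr)$, where $\overline{(\cdot)}$ denotes $\EE_\tau$. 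The crucial point is that averaging replaces $\pi(\tau^{-1})$ by $\EE_\tau[\pi(\tau^{-1})] = c_\pi(h)$, which is precisely where the smoothness of $h$ enters: by Assumption \ref{hregu} the induced damping factor satisfies $\norme{c_\pi(h)}_{op}^2 \leq C_2\, \lambda_\pi^{-\nu}$. Since $w^{(\pi,kl)}$ and $w$ differ in only one block coordinate, a direct variance computation under $\PP_0^{(1)}$ then yields $\chi_1^2 \leq C \mu_D \lambda_\pi^{-\nu}/\varepsilon^2$.

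For $\pi \in \widehat G_D$, $\lambda_\pi \geq D$, so substituting $\mu_D = \kappa D^{-s-\dim G/2}$ and $D \sim n^{2/(2s+2\nu+\dim G)}$ gives $n\chi_1^2 \leq C' n\mu_D D^{-\nu} = O(1)$, which by the previous paragraph implies $\PP_w(L>1/2) \geq c > 0$. The main technical obstacle is turning the above heuristic Taylor calculation into a rigorous bound on $\chi_1^2 = \EE_0[(\psi_{w^{(\pi,kl)}}-\psi_w)^2/\psi_w]$. Two issues require care: (i) one must lower-bound $\psi_w$ on a set of large $\PP_0$-probability so that the division by $\psi_w$ inside the chi-squared integral is harmless, and (ii) the higher-order Taylor remainders in the expansion of $e^{a_\tau}$ involve the full sum over $\widehat G_D$ of quadratic forms in $c_\pi(Y_m)$, whose total contribution to $\chi_1^2$ must be shown to be $o(1/n)$. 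Handling (ii) uses Proposition \ref{somdpi} to control $\sum_{\pi\in\widehat G_D} d_\pi^2 \leq C D^{\dim G/2}$, and it is here that the assumption $s > 2\nu+\dim G$ is needed: it ensures that $\mu_D \sum_{\widehat G_D} d_\pi^2 \lambda_\pi^{-\nu}$ stays bounded, so the higher-order remainder is genuinely subdominant compared to the $O(1/n)$ linear-order contribution.
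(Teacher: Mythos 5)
Your reduction is attractive and genuinely different from the paper's route: you factorize $Q^{(\pi,kl)}/Q$ into i.i.d.\ single-observation likelihood ratios (valid, since the $(\tau_m,W_m)$ are i.i.d.\ and the mixing density $h(\alpha_1)\cdots h(\alpha_n)$ is a product), and then replace the paper's direct second-order expansion of $\log\bigl(Q^{(\pi,kl)}(X)/Q(X)\bigr)$ with Paley--Zygmund plus a $\chi^2$-divergence bound, so that everything hinges on $n\chi_1^2=\bigO{1}$. That is a legitimate and arguably cleaner architecture, and the quantity you end up needing, $n\mu_D D^{-\nu}=\bigO{1}$, is exactly the paper's relation \eqref{eq:mainbigO}.

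The gap is that the one estimate carrying all the difficulty, $\chi_1^2\leq C\mu_D\lambda_{\pi}^{-\nu}/\varepsilon^2$ (up to $\smallO{n^{-1}}$), is only argued at first order and then explicitly deferred. The first-order term is indeed damped by $\|c_\pi(h)\|_{op}^2\leq C_2\lambda_\pi^{-\nu}$, but the quadratic and higher terms in the expansion of $\int_G e^{a_\tau}h(\tau)\ud\tau$ receive \emph{no} such damping: quadratic functionals of $\pi(\tau^{-1})$ are not controlled by $c_\pi(h)$, and the exponent $a_\tau$ couples the distinguished block to all $\pi'\in\widehat{G}_D$ through the common $\tau$-integral, so ``$w$ and $w^{(\pi,kl)}$ differ in one coordinate'' does not by itself localize the computation. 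Showing these contributions (and the $1/\psi_w$ factor in $\chi_1^2=\EE_0[(\psi_{w^{(\pi,kl)}}-\psi_w)^2/\psi_w]$) are $\smallO{n^{-1}}$ is precisely the content of the paper's controls of the terms \eqref{eq:A1}--\eqref{eq:A7}, which rest on the relations \eqref{eq:asympsetting}; your proposal names these obstacles but does not resolve them, so the proof is incomplete at its central step. Relatedly, your stated role for the hypothesis $s>2\nu+\dim G$ is off: $\mu_D\sum_{\pi\in\widehat{G}_D}d_\pi^2\lambda_\pi^{-\nu}\lesssim D^{-s-\nu}$ is bounded (indeed vanishes) for every $s>0$, so that cannot be where the assumption bites. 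What the assumption actually buys, with $D=\lfloor n^{2/(2s+2\nu+\dim G)}\rfloor$ and $d_\pi^2\sim D^{\dim G/2}$, are the remainder controls $n\mu_D^{3/2}d_\pi^3\to0$ and $n\mu_D^2 d_\pi^4\to0$ of \eqref{eq:asympsetting} (the first of these needs the full strength $s>2\nu+\dim G$), and analogues of these are what you would need to make your higher-order remainders $\smallO{n^{-1}}$ in the $\chi^2$ computation.
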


\begin{proof}
Throughout the proof, we assume that $\mu_D = \kappa D^{-s-\dim G/2}$ and $D=\left\lfloor n^{\frac{2}{2s+2\nu+\dim G}}\right\rfloor$. To simplify the presentation, we also write $\EE = \EE_{w}$ and  $\PP = \PP_{w}$. Then, thanks to Proposition \ref{somdpi}, it follows that $d_{\pi}^2 \sim D^{(\dim G)/2}$  for $\lambda_{\pi} \in \widehat{G}_{D}$, and therefore, under the assumption that $s > 2  \nu +  \dim G$, one obtains the following relations (needed later on in the proof)
\begin{equation} \label{eq:asympsetting}
n \mu_{D}^{3/2} d_{\pi}^{3}  \to 0 , \; n \mu_{D}^{2} d_{\pi}^{4}  \to 0,  \;  n d_{\pi}^4   \mu_{D}^2 D^{-\nu} \to 0 \;   \mbox{ as }   n \to +\infty,
\end{equation}
and
\begin{equation} \label{eq:mainbigO}
n \mu_{D}  D^{-\nu} = \bigO{1} \;   \mbox{ as }   n \to +\infty.
\end{equation}
Without loss of generality,  we consider the case where $w_{\pi,kl} =  -d_{\pi}^{-1/2}$ and $w_{\pi,kl}^{(\pi,kl)} = d_{\pi}^{-1/2}$ and $\epsilon = 1$.  To simplify the presentation, we also introduce the notation $\tilde{w}_{\pi} = w_{\pi}^{(\pi,kl)}$. In the proof, we also make repeated use of the fact that 
\begin{equation}
\| w_{\pi} \|^{2}_{F} = d_{\pi} \mbox{ and } \| \tilde{w}_{\pi} \|^{2}_{F} = d_{\pi}. \label{eq:boundnormw}
\end{equation}
Since $c_{\pi} (Y_{m}) = \sqrt{\mu_{D}} w_{\pi} \pi(\tau_{m}^{-1})  +  c_{\pi}(W_{m}) $ (under the hypothesis that $f^{\star} = f_w$) and using the fact that $   \| \tilde{w}_{\pi} \|^{2}_{F} =    \| w_{\pi}  \|^{2}_{F}$, simple calculations on the likelihood ratios $\frac{\ud\PP_{w,\alpha}}{\ud \PP_{0}}(X)$ and $\frac{\ud\PP_{w^{(\pi,kl)},\alpha}}{\ud \PP_{0}}(X)$ yield that
$$
 \frac{Q^{(\pi,kl)}(X)}{Q(X)} = \frac{\prod_{m=1}^{n}    \int_{G} \exp (\tilde{Z}_{m}^{(1)} + \tilde{Z}_{m}^{(2)} ) h(\alpha_{m}) \ud  \alpha_{m}  }{\prod_{m=1}^{n}    \int_{G} \exp (Z_{m}^{(1)} + Z_{m}^{(2)}  ) h(\alpha_{m}) \ud  \alpha_{m}  }
$$
where
\begin{eqnarray*}
\tilde{Z}_{m}^{(1)}  = d_{\pi}   \mu_{D}  \langle w_{\pi} \pi(\tau_{m}^{-1}) , \tilde{w}_{\pi} \pi(\alpha_{m}^{-1}) \rangle_{F},  & \quad & \tilde{Z}_{m}^{(2)}  =  d_{\pi} \sqrt{\mu_{D}}  \langle c_{\pi} (W_{m}) , \tilde{w}_{\pi}  \pi(\alpha_{m}^{-1}) \rangle_{F}, \\
Z_{m}^{(1)}  = d_{\pi}   \mu_{D}  \langle w_{\pi} \pi(\tau_{m}^{-1}) , w_{\pi}  \pi(\alpha_{m}^{-1}) \rangle_{F},  & \quad & Z_{m}^{(2)}  =  d_{\pi} \sqrt{\mu_{D}}  \langle c_{\pi} (W_{m}) , w_{\pi}  \pi(\alpha_{m}^{-1}) \rangle_{F}.
\end{eqnarray*}
Note that by Cauchy-Schwarz's inequality 
$$
|\tilde{Z}_{m}^{(1)}|^{2} \leq d_{\pi}^{2}   \mu_{D}^{2} \| w_{\pi} \pi(\tau_{m}^{-1}) \|^{2}_{F}  \| \tilde{w}_{\pi} \pi(\alpha_{m}^{-1}) \|^{2}_{F} = d_{\pi}^{2}   \mu_{D}^{2} \| w_{\pi}   \|^{2}_{F}  \| \tilde{w}_{\pi}   \|^{2}_{F},
$$
and
$$
|Z_{m}^{(1)}|^{2} \leq d_{\pi}^{2}   \mu_{D}^{2} \| w_{\pi} \pi(\tau_{m}^{-1}) \|^{2}_{F}  \| w_{\pi} \pi(\alpha_{m}^{-1}) \|^{2}_{F} = d_{\pi}^{2}   \mu_{D}^{2} \| w_{\pi}   \|^{4}_{F} . 
$$
Since the coefficients  of the matrix $c_{\pi}(W_m)$ are independent complex Gaussian random variables   with zero expectation and variance $d_{\pi}^{-1}$, one has  that $\tilde{Z}_{m}^{(2)}$ (resp.\ $Z_{m}^{(2)}$) is a Gaussian random variable with zero mean and variance $d_{\pi} \mu_{D} \| \tilde{w}_{\pi}  \pi(\alpha_{m}^{-1}) \|^{2}_{F}  = d_{\pi} \mu_{D} \| \tilde{w}_{\pi}  \|^{2}_{F}$ (resp.\ $d_{\pi} \mu_{D} \| w_{\pi}  \pi(\alpha_{m}^{-1}) \|^{2}_{F}  = d_{\pi} \mu_{D} \| w_{\pi}  \|^{2}_{F} $). Thence, by  \eqref{eq:boundnormw}, one obtains that 
\begin{equation} \label{eq:moments}
\EE | \tilde{Z}_{m}^{(1)} |^2  \leq  \mu_{D}^{2} d_{\pi}^4, \; \EE | Z_{m}^{(1)} |^2  \leq  \mu_{D}^{2} d_{\pi}^4  \mbox{ and }   \EE | \tilde{Z}_{m}^{(2)} |^2  =  \mu_{D} d_{\pi}^2, \;  \EE | Z_{m}^{(2)} |^2  =    \mu_{D} d_{\pi}^2. 
\end{equation}
Therefore, \eqref{eq:asympsetting} and Markov's inequality imply that
\begin{equation} \label{eq:tildeZ}
|\tilde{Z}_{m}^{(1)}|^2 = \smallOp{n^{-1}}, \; |\tilde{Z}_{m}^{(2)}|^3 = \smallOp{n^{-1}},  \;  |\tilde{Z}_{m}^{(1)} \tilde{Z}_{m}^{(2)}|  = \smallOp{n^{-1}},
\end{equation}
and
\begin{equation} \label{eq:Z}
|Z_{m}^{(1)}|^2 = \smallOp{n^{-1}}, \; |Z_{m}^{(2)}|^3 = \smallOp{n^{-1}}, \; |Z_{m}^{(1)} Z_{m}^{(2)}|  = \smallOp{n^{-1}}.
\end{equation}
Hence, using    \eqref{eq:tildeZ}, \eqref{eq:Z} and  the second order Taylor expansion $\exp(z) = 1+z + \frac{z^2}{2} + \bigO{z^{3}}$ it follows that
\begin{eqnarray*}
\log \left(  \frac{Q^{(\pi,kl)}(X)}{Q(X)} \right) &=& \sum_{m=1}^{n} \log \left( 1 + \int_{G} \left( \tilde{Z}_{m}^{(1)} + \tilde{Z}_{m}^{(2)} + \frac{1}{2} |\tilde{Z}_{m}^{(2)}|^2  \right) h(\alpha_{m}) \ud \alpha_{m} + \smallOp{n^{-1}} \right) \\
& & - \sum_{m=1}^{n} \log \left( 1 + \int_{G} \left( Z_{m}^{(1)} + Z_{m}^{(2)} +  \frac{1}{2} |Z_{m}^{(2)}|^2  \right) h(\alpha_{m}) \ud \alpha_{m} + \smallOp{n^{-1}} \right).
\end{eqnarray*}
Then,  using   \eqref{eq:Z} and the second order expansion $\log(1 + z) = z - \frac{z^2}{2} + \bigO{z^3} $ yield
\begin{eqnarray}
\log \left(  \frac{Q^{(\pi,kl)}(X)}{Q(X)} \right) &=& \sum_{m=1}^{n}  \int_{G} \left( \tilde{Z}_{m}^{(1)} + \tilde{Z}_{m}^{(2)} + \frac{1}{2} |\tilde{Z}_{m}^{(2)}|^2 \right) h(\alpha_{m}) \ud \alpha_{m}  \nonumber \\
& & -\frac{1}{2}  \left[    \int_{G} \left( \tilde{Z}_{m}^{(1)} + \tilde{Z}_{m}^{(2)} + \frac{1}{2} |\tilde{Z}_{m}^{(2)}|^2 \right) h(\alpha_{m}) \ud \alpha_{m} \right]^{2} \label{eq:termtildeZ} \\
& & - \sum_{m=1}^{n}  \int_{G} \left( Z_{m}^{(1)} + Z_{m}^{(2)} +  \frac{1}{2} |Z_{m}^{(2)}|^2 \right) h(\alpha_{m}) \ud \alpha_{m} \nonumber  \\
& & +\frac{1}{2}  \left[    \int_{G} \left( Z_{m}^{(1)} + Z_{m}^{(2)} + \frac{1}{2} |Z_{m}^{(2)}|^2  \right) h(\alpha_{m}) \ud \alpha_{m} \right]^{2} \label{eq:termZ}  \\ 
& & + \smallOp{1} \nonumber . 
\end{eqnarray}
Let us now study the expansion of the quadratic term \eqref{eq:termZ}. Since $c_{\pi}(h) = \int_{G} \pi(\tau_{m}^{-1}) h(\alpha_{m}) \ud \alpha_{m}$, it follows by Cauchy-Schwarz's inequality  that
\begin{eqnarray*}
\sum_{m=1}^{n}  \left[ \int_{G}  Z_{m}^{(1)} h(\alpha_{m}) \ud \alpha_{m}  \right]^{2} & = & d_{\pi}^2   \mu_{D}^2 \sum_{m=1}^{n}   \langle w_{\pi} \pi(\tau_{m}^{-1}) , w_{\pi} c_{\pi}(h) \rangle_{F}^{2} \leq n d_{\pi}^2   \mu_{D}^2 \| w_{\pi} \|^{2}_{F}  \| w_{\pi} c_{\pi}(h)  \|^{2}_{F} \\
& \leq &  n d_{\pi}^2   \mu_{D}^2 \| w_{\pi} \|^{4}_{F}    \| c_{\pi}(h)  \|^{2}_{op} \leq C_{2} n d_{\pi}^4   \mu_{D}^2 D^{-\nu} = \smallO{1}.
\end{eqnarray*}
for some constant $C_{2} > 0$, where the last inequality is a consequence of Assumption \ref{hregu}, the fact that $\lambda_{\pi}^{-1} \leq D^{-1}$ for $\lambda_{\pi} \in \widehat{G}_{D}$ and the third relation in \eqref{eq:asympsetting}.

By Jensen's inequality and \eqref{eq:asympsetting} and since the  $Z_{m}^{(2)}$'s are i.i.d.\   Gaussian random variables with zero mean and variance $\mu_{D} d_{\pi}^{2} $ one obtains that
\begin{eqnarray*}
\EE \sum_{m=1}^{n}  \left[ \int_{G}  |Z_{m}^{(2)}|^{2} h(\alpha_{m}) \ud \alpha_{m}  \right]^{2} & \leq & \sum_{m=1}^{n}  \int_{G} \EE  |Z_{m}^{(2)}|^{4}  h(\alpha_{m}) \ud \alpha_{m} \leq  3 n  \mu_{D}^2  d_{\pi}^{4}= \smallO{1},
\end{eqnarray*}
and thus  Markov's inequality implies that $ \sum_{m=1}^{n}  \left[ \int_{G}  |Z_{m}^{(2)}|^{2} h(\alpha_{m}) \ud \alpha_{m}  \right]^{2} = \smallOp{1}$.
Now, using  \eqref{eq:asympsetting} and  \eqref{eq:moments} it follows that
$$
\EE \left| \sum_{m=1}^{n}  \left( \int_{G}  Z_{m}^{(1)} h(\alpha_{m}) \ud \alpha_{m}  \right) \left( \int_{G}  Z_{m}^{(2)} h(\alpha_{m}) \ud \alpha_{m}  \right) \right| \leq n \mu_{D}^{3/2} d_{\pi}^{3} =\smallO{1},
$$
which implies that $ \sum_{m=1}^{n}  \left( \int_{G}  Z_{m}^{(1)} h(\alpha_{m}) \ud \alpha_{m}  \right) \left( \int_{G}  Z_{m}^{(2)} h(\alpha_{m}) \ud \alpha_{m}  \right) = \smallOp{1}$. Finally, using \eqref{eq:Z}, it follows  that $\sum_{m=1}^{n}  \left( \int_{G}  Z_{m}^{(2)} h(\alpha_{m}) \ud \alpha_{m}  \right) \left( \int_{G}  |Z_{m}^{(2)}|^{2} h(\alpha_{m}) \ud \alpha_{m}  \right) = \smallOp{1}$. By applying the same arguments to the  expansion of the quadratic term \eqref{eq:termtildeZ}, one finally obtains that
\begin{eqnarray*}
\log \left(  \frac{Q^{(\pi,kl)}(X)}{Q(X)} \right) &=& \sum_{m=1}^{n}  \int_{G} \left( \tilde{Z}_{m}^{(1)} + \tilde{Z}_{m}^{(2)} + \frac{1}{2} |\tilde{Z}_{m}^{(2)}|^2 \right) h(\alpha_{m}) \ud \alpha_{m}    \\
& & -\frac{1}{2}  \left[    \int_{G}  \tilde{Z}_{m}^{(2)} h(\alpha_{m}) \ud \alpha_{m} \right]^{2}   \\
& & - \sum_{m=1}^{n}  \int_{G} \left( Z_{m}^{(1)} + Z_{m}^{(2)} +  \frac{1}{2} |Z_{m}^{(2)}|^2 \right) h(\alpha_{m}) \ud \alpha_{m}     \\
& & +\frac{1}{2}  \left[    \int_{G}  Z_{m}^{(2)}  h(\alpha_{m}) \ud \alpha_{m} \right]^{2}   \\ 
& & + \smallOp{1}.
\end{eqnarray*}
Using that $   \| \tilde{w}_{\pi} \|^{2}_{F} =    \| w_{\pi}  \|^{2}_{F}$ and the equality
$$
\langle - w_{\pi}c_{\pi}(h), (\tilde{w}_{\pi} - w_{\pi}) c_{\pi}(h)  \rangle_{F} - \frac{1}{2} \| w_{\pi}c_{\pi}(h) \|^{2}_{F} + \frac{1}{2} \| \tilde{w}_{\pi}c_{\pi}(h) \|^{2}_{F}  -  \frac{1}{2} \| (\tilde{w}_{\pi}-w_{\pi})  c_{\pi}(h) \|^{2}_{F}= 0
$$
one obtains that
\begin{eqnarray}
\log \left(  \frac{Q^{(\pi,kl)}(X)}{Q(X)} \right) &=&  \sum_{m=1}^{n} d_{\pi}    \mu_{D}  \langle w_{\pi} \pi(\tau_{m}^{-1}) - w_{\pi} c_{\pi}(h)  , (\tilde{w}_{\pi} - w_{\pi}) c_{\pi}(h)   \rangle_{F}  \label{eq:A1} \\
& &  +   \sum_{m=1}^{n}  d_{\pi} \sqrt{\mu_{D}}  \langle c_{\pi} (W_{m}) , (\tilde{w}_{\pi} - w_{\pi}) c_{\pi}(h)  \rangle_{F} \label{eq:A2}   \\
& &  +   \sum_{m=1}^{n}  \frac{1}{2}      \int_{G}  | \tilde{Z}_{m}^{(2)}|^{2} h(\alpha_{m}) \ud \alpha_{m}     - \frac{n}{2}   d_{\pi} \mu_{D} \| \tilde{w}_{\pi} \|^{2}_{F}  \label{eq:A3}  \\
& & -  \sum_{m=1}^{n}  \frac{1}{2}     \int_{G}  | Z_{m}^{(2)} |^{2} h(\alpha_{m}) \ud \alpha_{m}  +  \frac{n}{2} d_{\pi} \mu_{D}  \| w_{\pi} \|^{2}_{F}   \label{eq:A4}  \\
& &  -  \sum_{m=1}^{n} \frac{1}{2}  \left(    \int_{G}  \tilde{Z}_{m}^{(2)} h(\alpha_{m}) \ud \alpha_{m} \right)^{2}  + \frac{n}{2}  d_{\pi} \mu_{D}  \| \tilde{w}_{\pi} c_{\pi}(h)\|_{F}^{2}   \label{eq:A5} \\
& & +  \sum_{m=1}^{n}  \frac{1}{2}  \left(    \int_{G}  Z_{m}^{(2)} h(\alpha_{m}) \ud \alpha_{m} \right)^{2}  - \frac{n}{2}    d_{\pi} \mu_{D}  \| w_{\pi} c_{\pi}(h)\|_{F}^{2}   \label{eq:A6} \\
& &  - \frac{n}{2} d_{\pi} \mu_{D}   \|(\tilde{w}_{\pi} - w_{\pi}) c_{\pi}(h)\|_{F}^{2}  + \smallOp{1}.  \label{eq:A7}
\end{eqnarray}

\noindent {\bf Control of the term \eqref{eq:A7}.}  Thanks to  Assumption \ref{hregu} and  the fact that $\lambda_{\pi}^{-1} \leq D^{-1}$ for $\lambda_{\pi} \in \widehat{G}_{D}$ and  \eqref{eq:asympsetting}, it follows by \eqref{eq:mainbigO} that 
\begin{equation}
n d_{\pi} \mu_{D}   \|(\tilde{w}_{\pi} - w_{\pi}) c_{\pi}(h)\|_{F}^{2} \leq n d_{\pi} \mu_{D} \| c_{\pi}(h)\|_{op}^{2}   \|\tilde{w}_{\pi} - w_{\pi} \|_{F}^{2} \leq 4 n   \mu_{D} D^{-\nu} = \bigO{1}, \label{eq:bigO}
\end{equation}
and thus the term \eqref{eq:A7} is bounded in probability. \\

\noindent {\bf Control of the term \eqref{eq:A1}.} Remark that \eqref{eq:asympsetting} can be used to prove that
\begin{eqnarray*}
\Var \left( \sum_{m=1}^{n} d_{\pi}    \mu_{D}  \langle w_{\pi} \pi(\tau_{m}^{-1})   , (\tilde{w}_{\pi} - w_{\pi}) c_{\pi}(h)   \rangle_{F} \right) & \leq & n d_{\pi}^2    \mu_{D}^2 \| w_{\pi} \|_{F}^2   \| (\tilde{w}_{\pi} - w_{\pi}) c_{\pi}(h)    \|_{F}^2 \\
& \leq &  n d_{\pi}^2    \mu_{D}^2 \| w_{\pi} \|_{F}^2   \| \tilde{w}_{\pi} - w_{\pi} \|_{F}^2 \| c_{\pi}(h)    \|_{op}^2 \\
& \leq & 4 n d_{\pi}^2    \mu_{D}^2  D^{-\nu} = \smallO{1},
\end{eqnarray*}
and therefore by Chebyshev's inequality the term \eqref{eq:A1} converges to zero in probability. \\

\noindent {\bf Control of the term \eqref{eq:A2}.}   First, since the coefficients  of the matrix $c_{\pi}(W_m)$ are independent complex Gaussian random variables   with zero expectation and variance $d_{\pi}^{-1}$, one has that $T_{m} = d_{\pi} \sqrt{\mu_{D}}  \langle c_{\pi} (W_{m}) , (\tilde{w}_{\pi} - w_{\pi}) c_{\pi}(h)  \rangle_{F}$ are i.i.d.\ Gaussian random variables with zero mean and variance $ d_{\pi}  \mu_{D}  \|(\tilde{w}_{\pi} - w_{\pi}) c_{\pi}(h)\|_{F}^{2}$. Using inequality \eqref{eq:bigO} it follows that
\begin{equation}
\Var \left(   \sum_{m=1}^{n} T_{m}  \right) = n d_{\pi}  \mu_{D}  \|(\tilde{w}_{\pi} - w_{\pi}) c_{\pi}(h)\|_{F}^{2} = \bigO{1}, \label{eq:boundvar}
\end{equation}
and standard arguments in concentration of Gaussian variables imply that  for any $t > 0$
\begin{equation}
\PP \left(  \left| \sum_{m=1}^{n}  T_{m}  \right| \geq t \right) \leq 2 \exp\left( - \frac{t^{2}}{2  n d_{\pi}  \mu_{D}  \|(\tilde{w}_{\pi} - w_{\pi}) c_{\pi}(h)\|_{F}^{2} }\right). \label{eq:concen}
\end{equation}
Therefore, combining \eqref{eq:boundvar} and \eqref{eq:concen} imply that the term \eqref{eq:A2} is bounded in probability. \\

\noindent {\bf Control of the terms \eqref{eq:A3} and \eqref{eq:A4}.} Remark that Jensen's inequality, the fact that   the $\tilde{Z}_{m}^{(2)}$'s are i.i.d.\   Gaussian random variables with zero mean and variance $\mu_{D} d_{\pi}^{2} $ and \eqref{eq:asympsetting}  imply that
\begin{eqnarray*}
\Var \left(  \sum_{m=1}^{n}      \int_{G}  | \tilde{Z}_{m}^{(2)}|^{2} h(\alpha_{m}) \ud \alpha_{m}  \right) & = &  \sum_{m=1}^{n}    \Var \left(  \int_{G}  | \tilde{Z}_{m}^{(2)}|^{2} h(\alpha_{m}) \ud \alpha_{m} \right)   \\
& \leq & \sum_{m=1}^{n}    \EE \left(  \int_{G}  | \tilde{Z}_{m}^{(2)}|^{2} h(\alpha_{m}) \ud \alpha_{m} \right)^{2} \\ 
& \leq & n   \int_{G}  \EE  | \tilde{Z}_{1}^{(2)}|^{4} h(\alpha_{1}) \ud \alpha_{1} \leq 3 n  \mu_{D}^2  d_{\pi}^{4}= \smallO{1},
\end{eqnarray*}
and thus the terms  \eqref{eq:A3} and \eqref{eq:A4} converge to zero in probability by Chebyshev's inequality. \\

\noindent {\bf Control of the terms \eqref{eq:A5} and \eqref{eq:A6}.} Similarly, by Jensen's inequality and  \eqref{eq:asympsetting} one has that
$$
\Var \left(  \sum_{m=1}^{n}  \left(    \int_{G}  \tilde{Z}_{m}^{(2)} h(\alpha_{m}) \ud \alpha_{m} \right)^{2} \right) \leq n  \int_{G} \EE |  \tilde{Z}_{1}^{(2)} |^{4} h(\alpha_{1}) \ud \alpha_{1}  \leq 3 n  \mu_{D}^2  d_{\pi}^{4}= \smallO{1},
$$
and thus the terms  \eqref{eq:A5} and \eqref{eq:A6} converge to zero in probability by Chebyshev's inequality. \\

Combining the above controls of the terms \eqref{eq:A1} to \eqref{eq:A7}, one obtains that $\log \left(  \frac{Q^{(\pi,kl)}(X)}{Q(X)} \right)$ is bounded in probability   which completes the proof of Proposition \ref{prop:final}.
\end{proof}

Now, recall that using \eqref{minrisque1} and \eqref{inegalite3}
\begin{eqnarray*}
\sup_{f_w\in\Omega}R(\hat{f},f) & \geq &  \frac{\mu_D}{4|\tilde\Omega|}       \sum_{\pi\in\widehat{G}_D}d_{\pi}     \sum_{k,l=1}^{d_{\pi}} \sum_{\substack{ w \in \tilde\Omega \\w_{\pi,kl}=d_{\pi}^{-1/2}} }  C_{\pi,kl}\\
& \geq &\frac{\mu_D}{4|\tilde\Omega|}       \sum_{\pi\in\widehat{G}_D}d_{\pi}     \sum_{k,l=1}^{d_{\pi}} \sum_{\substack{ w \in \tilde\Omega \\w_{\pi,kl}=d_{\pi}^{-1/2}} }   4 d_{\pi}^{-1} \EE_{w}\left(\min\left(1,\frac{Q^{(\pi,kl)}(X)}{Q(X)}\right)\right)
\end{eqnarray*}
Combining inequality \eqref{eq:Markov} and Proposition \ref{prop:final} one obtains that there exists a constant $C > 0$ (not depending on $n$) such that with the choice $D = \left\lfloor n^{\frac{2}{2s+2\nu+\dim G}}\right\rfloor$ and for all sufficiently large $n$  
\begin{eqnarray*}
\sup_{f_w\in\Omega}R(\hat{f},f) & \geq &     \frac{\mu_D}{|\tilde\Omega|}       \sum_{\pi\in\widehat{G}_D}d_{\pi}     \sum_{k,l=1}^{d_{\pi}} \sum_{\substack{ w \in \tilde\Omega \\w_{\pi,kl}=d_{\pi}^{-1/2}} }   d_{\pi}^{-1} C \\
& \geq &\frac{C}{2} \mu_D\sum_{\pi\in\widehat{G}_D}d_{\pi}^2,
\end{eqnarray*}
where we have the fact that for any $\pi,k,l$ the cardinality of the set $\{ w \in \tilde\Omega \mbox{ with } w_{\pi,kl}=d_{\pi}^{-1/2} \}$ is $|\tilde\Omega|/2$.

Now, let $0 < \rho < 1$. Thanks to Proposition \ref{somdpi}, it follows that for $\eta  =  \rho W \frac{2^{\dim G/2}-1}{2^{\dim G/2}+1}$, one has that
$(W+\eta) D^{\dim G/2} \geq \sum_{\pi\in\widehat{G}_D}d_{\pi}^2\geq (W-\eta)  D^{\dim G/2}$  for all sufficiently large $D$, where $W$ is the constant defined in \eqref{eq:W}.
Hence,
\begin{eqnarray*}
\sum_{\pi\in\widehat{G}_D}d_{\pi}^2 &=& \sum_{\pi\,:\,\lambda_{\pi}<2D}d_{\pi}^2-\sum_{\pi\,:\,\lambda_{\pi}<D}d_{\pi}^2\\
&\geq & (W-\eta)(2D)^{\dim G/2}-(W+\eta) D^{\dim G/2}\\
&=& W'D^{\dim G/2}, \mbox{ with } W' = (1-\rho) W (2^{\dim G/2}-1) > 0.
\end{eqnarray*}
Taking $D=\left\lfloor n^{\frac{2}{2s+2\nu+\dim G}}\right\rfloor$ and since $\mu_D = \kappa D^{-s-\dim G/2}$ we finally obtain that 
\begin{eqnarray*}
n^{\frac{2s}{2s+2\nu+\dim G}}\sup_{f_{w}\in\Omega}R(\hat{f},f) & \geq & n^{\frac{2s}{2s+2\nu+\dim G}}KD^{-s-\dim G/2}D^{\dim G/2} =K,
\end{eqnarray*}
for some constant $K > 0$ not depending on $n$, which completes the proof of Theorem \ref{th:lowerbound}.


\section{Some background on noncommutative harmonic analysis} \label{app:Liegroup}

In this appendix, some aspects of the theory of the Fourier transform on compact Lie groups are summarized. For detailed introductions to Lie groups and noncommutative harmonic analysis we refer to the books \cite{MR2062813,MR1738431,MR2279709}. Throughout the Appendix, it is assumed that  $G$ is a connected and compact Lie group.

\subsection{Representations}

\begin{defi}
Let $V$ be a finite-dimensional $\CC$-vector space. A  representation of $G$ in $V$ is a continuous homomorphism
$\pi:G\to GL(V),$
where $GL(V)$ is the set of automorphisms of $V$. The representation $\pi$ is said to be irreducible if, for any $g\in G$, the only invariant subspaces by the automorphism $\pi(g)$ are $\{0\}$ and $V$.
\end{defi}
If  $G$ is a compact group and $\pi$ is an irreducible representation in $V$, then the vector space $V$ is finite dimensional, and we denote by $d_{\pi}$ the dimension of $V$.  By choosing a basis for $V$, it is often convenient to identify $\pi(g)$ with a matrix of size $d_{\pi} \times d_{\pi}$ with complex entries.
\begin{defi}
Two representations $\pi,\,\pi'$ in $V$ are called equivalent if there exists $M\in GL(V)$ such that  $\pi (g)=M\pi'(g)M^{-1}$ for all $g\in G$.
\end{defi}
\begin{defi}
A representation $\pi$ is said to be unitary if $\pi(g)$ is a unitary operator for every $g\in G$.
\end{defi}
Let $\pi$ be a representation in $V$.  Then, there exists an inner product on $V$ such that $\pi$ is unitary. This means that any  irreducible representation $\pi$ in $V$ is  equivalent to an irreducible representation that is unitary.
\begin{defi}
We denote by $\widehat{G}$  the set of equivalence classes of irreducible representations of $G$, and we identify $\widehat{G}$ to the set of unitary representations of each class. 
\end{defi}
\begin{prop}
Let $g\in G$ and $\pi\in\widehat{G}$, then  
$\pi(g^{-1})=\overline{\pi(g)}^t.$
\end{prop}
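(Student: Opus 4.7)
The proposition is an immediate consequence of two facts already stated in the excerpt. First I would use that $\pi : G \to GL(V)$ is a continuous group homomorphism, so $\pi(g)\pi(g^{-1}) = \pi(e) = \Id_{V}$ for every $g \in G$, giving $\pi(g^{-1}) = \pi(g)^{-1}$. Second, by the identification made in the definition preceding the proposition, $\widehat{G}$ is identified with the set of unitary representations of each equivalence class, so $\pi(g)$ is a unitary operator on $V$. After fixing a basis of $V$ with respect to which $\pi$ is unitary, the matrix $\pi(g) \in GL_{d_{\pi} \times d_{\pi}}(\CC)$ satisfies $\pi(g)\,\overline{\pi(g)}^{t} = \Id_{d_{\pi}}$, which means $\pi(g)^{-1} = \overline{\pi(g)}^{t}$. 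Combining the two identities yields $\pi(g^{-1}) = \overline{\pi(g)}^{t}$, as claimed.

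There is essentially no obstacle here: the statement is purely formal once unitarity is granted. The only substantive ingredient underlying the proposition is the earlier result asserting that every irreducible representation of the compact Lie group $G$ is equivalent to a unitary one, which is itself proved by averaging an arbitrary Hermitian inner product on $V$ against the (bi-invariant, finite) Haar measure $\ud g$ to produce a $\pi$-invariant inner product; with respect to this inner product each $\pi(g)$ is unitary, and the identification made when defining $\widehat{G}$ then reduces the claim to the elementary computation above.
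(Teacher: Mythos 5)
Your argument is correct and is exactly the standard reasoning the paper implicitly relies on: the paper states this proposition without proof, immediately after noting that every representation of the compact group $G$ is equivalent to a unitary one and identifying $\widehat{G}$ with unitary representatives, so the claim reduces to $\pi(g^{-1})=\pi(g)^{-1}=\overline{\pi(g)}^{t}$ as you wrote. Your additional remark on constructing the invariant inner product by averaging against the Haar measure correctly supplies the one substantive ingredient the paper takes for granted.
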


\subsection{Peter-Weyl theorem}\label{PWpartie}

Let $\pi\in\widehat{G}$ be a representation in a Hilbert space $V$. Let $\B_{\pi}=(e_1,...,e_{d_{\pi}})$ a basis of $V$. For $g\in G$, denote by $\phi_{ij}^{\pi}(g)=\langle e_i,\pi(g)e_j\rangle$ the coordinates of  $\pi$ in the basis $\B_{\pi}$ for $ i,j\in[\![1,d_{\pi}]\!]$.

\begin{thm} \label{theo:PW}
If $G$ is a compact group then $\left(\sqrt{d_{\pi}}\phi_{ij}^{\pi}(.)\right)_{\pi\in\widehat{G},\ i,j\in[\![1,d_{\pi}]\!]}$  is an orthonormal basis of the Hilbert space $\LL^2(G)$ endowed with the inner product $\langle f, h \rangle = \int_G f(g) \overline{h(g)} dg $.
\end{thm}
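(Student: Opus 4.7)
The plan is to prove the Peter--Weyl theorem in two stages: orthonormality of the family $\{\sqrt{d_\pi}\phi_{ij}^\pi\}$, followed by density of its linear span in $\LL^2(G)$.

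For orthonormality, I would rely on Schur's lemma: an intertwining operator between two irreducible unitary representations is zero if the representations are inequivalent, and a scalar multiple of the identity if they coincide. Given irreducibles $\pi$ on $V$ and $\pi'$ on $V'$ and any linear map $A : V \to V'$, the averaged operator $\tilde A = \int_G \pi'(g)\, A\, \pi(g^{-1})\, dg$ intertwines $\pi$ with $\pi'$ by bi-invariance of the Haar measure. Schur's lemma forces $\tilde A = 0$ when $\pi \not\sim \pi'$ and $\tilde A = (\mathrm{Tr}(A)/d_\pi)\, \mathrm{Id}$ when $\pi = \pi'$. Reading off matrix entries with $A$ a rank-one elementary matrix yields the Schur orthogonality relations
$$\int_G \phi_{ij}^\pi(g)\,\overline{\phi_{kl}^{\pi'}(g)}\, dg = \delta_{\pi,\pi'}\, d_\pi^{-1}\, \delta_{ik}\, \delta_{jl},$$
which is precisely the orthonormality of the normalised family.

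For density, let $\mathcal{E}$ denote the linear span of all matrix coefficients $\phi_{ij}^\pi$. The approach is spectral-theoretic. Fix $\phi \in C(G)$ with $\phi(g) = \overline{\phi(g^{-1})}$ and consider the convolution operator $T_\phi f = \phi * f$ on $\LL^2(G)$. Its kernel $(g,h) \mapsto \phi(gh^{-1})$ is continuous on the compact set $G \times G$, so $T_\phi$ is Hilbert--Schmidt, hence compact; the symmetry condition on $\phi$ makes it self-adjoint; and associativity of convolution ensures that $T_\phi$ commutes with right translations. By the spectral theorem for compact self-adjoint operators, every nonzero eigenspace of $T_\phi$ is a finite-dimensional right-translation invariant subspace of $\LL^2(G)$. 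Complete reducibility for compact groups, obtained by averaging any Hermitian form to an invariant one, decomposes such an eigenspace as a direct sum of irreducibles, so every nonzero eigenspace of $T_\phi$ is contained in $\mathcal{E}$. Letting $\phi = \phi_\alpha$ run over a symmetrised approximate identity (so that $\phi_\alpha(g) = \overline{\phi_\alpha(g^{-1})}$), one has $T_{\phi_\alpha} f \to f$ uniformly for each $f \in C(G)$, while every $T_{\phi_\alpha} f$ lies in the $\LL^2$-closure of $\mathcal{E}$. Density of $C(G)$ in $\LL^2(G)$ then finishes the argument.

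The main obstacle is the density step: Schur orthogonality is essentially a short computation once Schur's lemma is available, but density requires producing enough finite-dimensional subrepresentations of the right regular representation from the abstract compactness of $G$, which is achieved here through the compactness of convolution operators. One must also be careful to place the approximate identity on the side consistent with the chosen convention for $T_\phi$ so that $T_{\phi_\alpha} f \to f$, and to arrange the symmetrisation so that $T_{\phi_\alpha}$ is self-adjoint and the spectral theorem applies.
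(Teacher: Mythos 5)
The paper itself does not prove Theorem \ref{theo:PW}: it is the classical Peter--Weyl theorem, stated in the appendix purely as background and delegated to the cited monographs on noncommutative harmonic analysis, so there is no internal proof to compare against. Your argument is the standard textbook proof and it is correct in both stages: Schur's lemma applied to the averaged operator $\tilde A=\int_G \pi'(g)A\pi(g^{-1})\,\ud g$ yields the orthogonality relations with the factor $d_\pi^{-1}$, which is exactly orthonormality of the normalized family $\sqrt{d_\pi}\,\phi_{ij}^{\pi}$; and the density step via Hilbert--Schmidt convolution operators attached to a symmetrized approximate identity, their finite-dimensional right-translation-invariant eigenspaces, and complete reducibility is the classical route to producing enough finite-dimensional subrepresentations of the regular representation. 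One detail you should make explicit: to pass from ``finite-dimensional right-invariant irreducible subspace'' to ``contained in the span of matrix coefficients'' one writes $f(xg)=\sum_j \langle f(\cdot\,g),e_j\rangle\, e_j(x)$ for an orthonormal basis $(e_j)$ of that subspace and evaluates at $x=e$, which requires the eigenfunctions to be continuous; this is automatic since an eigenfunction with eigenvalue $\lambda\neq 0$ equals $\lambda^{-1}\phi_\alpha * f$, the convolution of a continuous function with an $\LL^2$ function, hence continuous, but the sketch as written skips this point. With that remark added, your proof is complete and is essentially the argument given in the references the paper relies on.
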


%

\subsection{Fourier transform and convolution in $\LL^2(G)$} \label{sec:parseval}

Let $\pi\in\widehat{G}$ and define for any $f\in\LL^2(G)$ the linear mapping
\begin{eqnarray*}
c_{\pi}(f):\,V & \to & V\\
v & \mapsto & \int_G f(g)\overline{\pi(g)} ^T v\ud g =\int_G f(g)\pi(g^{-1}) v\ud g.
\end{eqnarray*} 
Note that the matrix $c_{\pi}(f)$ is the generalization to functions in $\LL^2(G)$  of the usual notion of Fourier coefficients.  

\begin{defi}\label{coeffourierlie}
Let  $f\in\LL^2(G)$ and $\pi\in\widehat{G}$. We call $c_{\pi}(f)$ the $\pi$-th Fourier coefficient of $f$.
\end{defi}
\begin{thm}\label{persevalie}
Let $f\in\LL^2 (G)$. Then $f(g)=\sum_{\pi\in \widehat{G}}d_{\pi}\Tr\left(\pi(g)c_{\pi}(f)\right),$
and 
$ ||f||_{\LL^2(G)}^2=\sum_{\pi\in\widehat{G}}d_{\pi}\Tr\left(c_{\pi}(f)\overline{c_{\pi}(f)}^t\right)=\sum_{\pi\in\widehat{G}}d_{\pi}\norme{c_{\pi}(f)}_F^2,$
where $\norme{\cdot}_F$ denotes the Frobenius norm of a matrix.
\end{thm}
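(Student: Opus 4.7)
The plan is to obtain both equalities as direct consequences of the Peter--Weyl orthonormal basis expansion (Theorem \ref{theo:PW}), with the main work being a careful bookkeeping of indices to convert the scalar Fourier series into the matrix/trace form stated here.

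First I would expand $f$ in the orthonormal basis $\{\sqrt{d_\pi}\,\phi^\pi_{ij}\}_{\pi\in\widehat G,\,1\le i,j\le d_\pi}$ provided by Theorem \ref{theo:PW}: in $\LL^2(G)$ one has
\[
f \;=\; \sum_{\pi\in\widehat G}\,\sum_{i,j=1}^{d_\pi}\,\bigl\langle f,\sqrt{d_\pi}\,\phi^\pi_{ij}\bigr\rangle\,\sqrt{d_\pi}\,\phi^\pi_{ij}.
\]
The next step is to identify these inner products with entries of $c_\pi(f)$. Since $\pi$ is unitary, the matrix element $\phi^\pi_{ij}(g)=\langle e_i,\pi(g)e_j\rangle$ equals $\pi(g)_{ij}$, and $\overline{\pi(g)_{ij}} = \pi(g^{-1})_{ji}$ from the identity $\pi(g^{-1})=\overline{\pi(g)}^{\,t}$. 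Therefore
\[
\bigl\langle f,\sqrt{d_\pi}\,\phi^\pi_{ij}\bigr\rangle
\;=\;\sqrt{d_\pi}\int_G f(g)\,\pi(g^{-1})_{ji}\,\ud g
\;=\;\sqrt{d_\pi}\,\bigl(c_\pi(f)\bigr)_{ji},
\]
by definition of the $\pi$-th Fourier coefficient.

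Plugging this back into the Peter--Weyl expansion yields
\[
f(g)\;=\;\sum_{\pi\in\widehat G} d_\pi\,\sum_{i,j=1}^{d_\pi} \bigl(c_\pi(f)\bigr)_{ji}\,\pi(g)_{ij}
\;=\;\sum_{\pi\in\widehat G} d_\pi\,\sum_{i=1}^{d_\pi}\bigl(\pi(g)\,c_\pi(f)\bigr)_{ii}
\;=\;\sum_{\pi\in\widehat G} d_\pi\,\Tr\bigl(\pi(g)\,c_\pi(f)\bigr),
\]
where the middle equality uses the matrix-product formula and the last one is the definition of the trace. This gives the first claimed identity.

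For the Parseval identity, I would simply apply the Hilbert-space Parseval formula to the orthonormal basis of Theorem \ref{theo:PW}:
\[
\|f\|_{\LL^2(G)}^2
\;=\;\sum_{\pi\in\widehat G}\sum_{i,j=1}^{d_\pi}\bigl|\langle f,\sqrt{d_\pi}\,\phi^\pi_{ij}\rangle\bigr|^2
\;=\;\sum_{\pi\in\widehat G} d_\pi\,\sum_{i,j=1}^{d_\pi}\bigl|(c_\pi(f))_{ji}\bigr|^2
\;=\;\sum_{\pi\in\widehat G} d_\pi\,\|c_\pi(f)\|_F^2,
\]
and finally rewrite $\|c_\pi(f)\|_F^2 = \Tr(c_\pi(f)\,\overline{c_\pi(f)}^{\,t})$ by the definition of the Frobenius norm. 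The only genuinely delicate point is the index bookkeeping $(i,j)\leftrightarrow(j,i)$ that converts the scalar Peter--Weyl series into the trace form; once the identity $\overline{\pi(g)_{ij}}=\pi(g^{-1})_{ji}$ is correctly used, the rest is essentially automatic. Convergence of the series in $\LL^2(G)$ is already guaranteed by the completeness of the Peter--Weyl basis, so no separate analytic estimate is needed here. \hfill$\square$
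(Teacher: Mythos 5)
Your derivation is correct. The paper itself states Theorem \ref{persevalie} without proof, as standard background cited from the references on noncommutative harmonic analysis, so there is no in-paper argument to compare against; what you have written is the standard deduction from the Peter--Weyl orthonormal basis of Theorem \ref{theo:PW}, and the one genuinely delicate point --- the transposition $\overline{\pi(g)_{ij}}=\pi(g^{-1})_{ji}$ coming from unitarity, which produces the $(j,i)$ entry of $c_\pi(f)$ and hence the trace $\Tr(\pi(g)c_\pi(f))$ in the right order --- is handled correctly.
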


\begin{defi}
Let $f,h \in\LL^2(G)$. The convolution of $f$ and $h$ is defined as the function
$(f*h)(g)=\int_G f(g'^{-1}g)h(g')\ud g'$ for $g \in G$.
\end{defi}

\begin{prop}
Let  $f,h\in\LL^2(G)$ then $c_{\pi}(f*h)=c_{\pi}(f)c_{\pi}(h)$.
\end{prop}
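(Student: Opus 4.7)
The plan is a direct computation starting from the definition of the Fourier coefficient together with the definition of the convolution, followed by a change of variables that exploits the left-invariance of the Haar measure. I would begin by writing
\[
c_{\pi}(f*h) = \int_{G} (f*h)(g)\, \pi(g^{-1}) \ud g = \int_{G} \int_{G} f(g'^{-1} g)\, h(g')\, \pi(g^{-1}) \ud g' \ud g,
\]
and justifying the exchange of integrals by Fubini, which is legitimate since $G$ is compact (hence of finite Haar measure), $f,h \in \LL^2(G)$, and the matrix entries of $\pi$ are uniformly bounded because $\pi$ is unitary.

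Next I would perform the substitution $u = g'^{-1} g$, i.e. $g = g' u$, in the integral over $g$ with $g'$ held fixed. The left-invariance of the Haar measure yields $\ud g = \ud u$, and the fact that $\pi$ is a homomorphism gives the key factorization $\pi(g^{-1}) = \pi(u^{-1} g'^{-1}) = \pi(u^{-1})\, \pi(g'^{-1})$. Substituting yields
\[
c_{\pi}(f*h) = \int_{G} h(g') \left( \int_{G} f(u)\, \pi(u^{-1}) \ud u \right) \pi(g'^{-1}) \ud g',
\]
where the factor $\pi(g'^{-1})$ can be pulled out of the inner integral because it does not depend on $u$. The inner integral is exactly $c_{\pi}(f)$, and since this matrix does not depend on $g'$, it can then be pulled out of the outer integral on the left, leaving $c_{\pi}(f) \int_{G} h(g')\, \pi(g'^{-1}) \ud g' = c_{\pi}(f)\, c_{\pi}(h)$.

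The only subtle point, and therefore the \emph{main obstacle} to watch, is the noncommutativity of $G$: the substitution $g = g' u$ (rather than $g = u g'$) and the resulting factorization $\pi(u^{-1})\pi(g'^{-1})$ fix the order of the matrix product, so that $c_{\pi}(f)$ must end up on the left of $c_{\pi}(h)$ and not the other way around. If one instead writes $(f*h)(g) = \int_{G} f(g u^{-1}) h(u) \ud u$ or uses right-invariance, one arrives at the convolution in the opposite order, so attention to the convention of convolution given earlier in the excerpt is essential. No analytic difficulty beyond this bookkeeping is expected.
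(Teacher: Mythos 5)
Your proof is correct: the computation from the definitions, the Fubini justification via compactness and unitarity, the left-invariant change of variables $g=g'u$, and the resulting order $c_{\pi}(f)c_{\pi}(h)$ (consistent with the paper's convolution convention and with its use of $c_{\pi}(f_m)=c_{\pi}(f^{\star})\pi(\tau_m^{-1})$) are all as they should be. The paper states this proposition as standard background without proof, and your argument is exactly the standard one.
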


\section{Laplace-Beltrami operator on a compact Lie group}

For further details on the material presented in this section we refer to the technical appendix in \cite{MR2446754} and to the book \cite{MR2426516}. In this section, we still assume that $G$ is a connected and compact Lie group. In what follows, with no loss of generality, we  identify (through an isomorphism) $G$ to a subgroup of $GL_{r \times r}(\CC)$ (the set of $r \times r$ nonsingular matrices with complex entries) for some integer $r > 0$.

\subsection{Lie algebra}

\begin{defi}
A one parameter subgroup of $G$ is a group homomorphism  $c:\RR\to G.$
\end{defi}

\begin{thm}
Let $c:\RR\to GL_{r \times r}(\CC)$ one parameter subgroup of $GL_{r \times r}(\CC)$. Then $c$ is $\C^{\infty}$ and
$c(t)=\exp(tA),$
with $A=\dfrac{\ud c}{\ud t}(0)$.
\end{thm}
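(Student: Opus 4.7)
The plan is to exploit the multiplicative property $c(s+t)=c(s)c(t)$ in two stages: first bootstrap the regularity of $c$ from mere continuity up to $C^\infty$, and then identify $c$ as the unique solution of a linear matrix ODE, which is by definition the matrix exponential.

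For the first stage, I would introduce the auxiliary map $F(t):=\int_{0}^{t}c(s)\,\mathrm{d}s$, viewed as a matrix-valued integral. Since $c$ is continuous (as a group homomorphism into a Lie group sitting inside $GL_{r\times r}(\CC)$) and $c(0)=I_r$, the fundamental theorem of calculus gives $F'(0)=I_r$. By continuity of the determinant, there is a small $t_0>0$ with $F(t_0)$ invertible. The homomorphism property lets us factor a translated integral:
\[
F(t+t_0)-F(t)=\int_{t}^{t+t_0}c(s)\,\mathrm{d}s=\int_{0}^{t_0}c(t+u)\,\mathrm{d}u=c(t)\int_{0}^{t_0}c(u)\,\mathrm{d}u=c(t)F(t_0).
\]
Since $F(t_0)$ is invertible, this yields $c(t)=\bigl(F(t+t_0)-F(t)\bigr)F(t_0)^{-1}$, the right-hand side being $C^1$ in $t$ by construction of $F$. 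A standard bootstrap argument — plugging the newly-acquired regularity of $c$ back into $F$ — then upgrades $c$ to $C^2$, $C^3$, and ultimately $C^\infty$.

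Once smoothness is established, I would differentiate the functional equation $c(s+t)=c(s)c(t)$ with respect to $s$ at $s=0$ and set $A:=\frac{\mathrm{d}c}{\mathrm{d}t}(0)$. This produces the linear matrix ODE
\[
c'(t)=A\,c(t),\qquad c(0)=I_r.
\]
Existence and uniqueness for linear ODEs (or a direct power-series computation) then gives $c(t)=\exp(tA)$, where the exponential is defined via its absolutely convergent series $\sum_{k\ge 0}(tA)^k/k!$ in $\M_{r\times r}(\CC)$.

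The only subtle point is the bootstrap step: one has to be careful that $t_0$ can be chosen so that $F(t_0)$ is invertible in $GL_{r\times r}(\CC)$, not merely nonzero as a matrix, and that the identity $F(t+t_0)-F(t)=c(t)F(t_0)$ is valid for all $t\in\RR$ and not just near zero. Both points follow directly from continuity of $\det$ near $0$ and from the homomorphism property holding globally on $\RR$, respectively; after that, the argument is routine.
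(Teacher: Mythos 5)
Your argument is correct, and there is nothing in the paper to compare it with line by line: the paper states this theorem as standard background in its appendix on Lie algebras and gives no proof, deferring to the cited references. Your two-stage structure is the classical textbook one and it works: the auxiliary map $F(t)=\int_0^t c(s)\,\mathrm{d}s$ satisfies $F'(0)=c(0)=I_r$, so $F(t_0)$ is invertible for some small $t_0>0$; the homomorphism property gives $F(t+t_0)-F(t)=c(t)F(t_0)$ for all $t\in\RR$, hence $c(t)=\bigl(F(t+t_0)-F(t)\bigr)F(t_0)^{-1}$ is $C^{1}$, and differentiating $c(s+t)=c(s)c(t)$ in $s$ at $s=0$ yields $c'(t)=Ac(t)$, $c(0)=I_r$, whose unique solution is $\exp(tA)$. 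Two remarks. First, your parenthetical claim that continuity of $c$ follows from its being a group homomorphism is not correct: with the axiom of choice there exist discontinuous homomorphisms $\RR\to GL_{1}(\CC)$ (built from a Hamel basis of $\RR$ over $\QQ$), and the theorem fails for them; continuity must be taken as part of the definition of a one-parameter subgroup (the paper's definition omits the word ``continuous'', but that is the standard convention and is what makes the statement true, in line with the paper's definition of a representation as a \emph{continuous} homomorphism). Second, the full $C^{\infty}$ bootstrap, while valid, is not needed: once $c$ is $C^{1}$ you already get the ODE and the formula $c(t)=\exp(tA)$, from which smoothness is immediate.
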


\begin{defi}
Let $\M_{r \times r}(\CC)$ be the set of $r \times r$ matrices with complex entries. The mapping $[.,.]:\M_{r \times r}(\CC)^2\to \M_{r \times r}(\CC):X,Y\mapsto [X,Y]=XY-YX$ is called a Lie bracket. A Lie algebra is the $\CC$-vector space $\mathfrak{g}=\left\{X\in\M_{r \times r}(\CC): \; \exp(tX)\in G \; \forall t\in\RR\right\}$ endowed with the bilinear form
$[.,.]:\mathfrak{g}\times\mathfrak{g}\to\mathfrak{g}: X,Y\mapsto [X,Y], $
which satisfies  $[X,Y]=-[Y,X]$ and
$\left[[X,Y],Z\right]+\left[[Y,Z],X\right]+\left[[Z,X],Y\right]=0$ (Jacobi identity). 
\end{defi}

\begin{defi}
The Killing form is the bilinear form $B$ defined by 
$$B:\mathfrak{g}\to \CC: X,Y\mapsto\Tr\left[ad(X)ad(Y)\right],$$
where $ad(X):\mathfrak{g}\to\mathfrak{g}:Y\mapsto[X,Y]$ is an endomorphism of $\mathfrak{g}$.
\end{defi}

\subsection{Roots of a Lie algebra}

A torus in $G$ is a connected Abelian subgroup  of $G$. It is well known that in a compact Lie group $G$, there exists (up to an isomorphism) a maximal torus. Let us fix such a maximal torus that we denote by $\TT$. Denote by $\mathfrak{t}$ the Lie algebra  of $\TT$, which is a maximal Abelian subalgebra of $\mathfrak{g}$. Let  $\mathfrak{h}=\mathfrak{t}+i\mathfrak{t}$ be the complexification of  $\mathfrak{t}$.  Then, $\mathfrak{h}$  is a maximal Abelian subalgebra of $\mathfrak{g}$ such that the linear transformations $(ad(H))_{H\in\mathfrak{h}}$ are simultaneously diagonalizable. Denote by $\mathfrak{h}^*$ the dual space of $\mathfrak{h}$. Let $\alpha\in\mathfrak{h}^*$, and define
$$\mathfrak{g}^{\alpha}=\left\{X\in\mathfrak{g}\ :\ \forall H\in\mathfrak{h},\quad [H,X]=\alpha(H)X \right\}.$$

\begin{defi}
$\alpha \in\mathfrak{h}^*$ is said to be a \textbf{root} of $\mathfrak{g}$ with respect to $\mathfrak{h}$, if $\mathfrak{g}^{\alpha}$ is nonzero, and in this case $\mathfrak{g}^{\alpha}$ is called the corresponding root space. We also denote by   $\widetilde{\Phi} \subset\mathfrak{h}^* $ the set of roots.
\end{defi}
Each root space is of dimension 1. One has that  $\mathfrak{g}^{0}=\mathfrak{h}$ (by the maximal property of $\mathfrak{h}$) and $\mathfrak{g}$ can be decomposed as the following direct sum
$\mathfrak{g}=\mathfrak{h}\bigoplus_{\alpha\in\widetilde{\Phi}}{\mathfrak{g}^{\alpha}},$
called the root space decomposition of   $\mathfrak{g}$. To each  $\alpha\in\widetilde{\Phi}$ we associate the hyperplane $\H_{\alpha}\subset \mathfrak{h}^*$ that is orthogonal to $\alpha$. The set of all hyperplanes $\left\{\ \H_{\alpha}\ :\ \alpha\in\widetilde{\Phi}\right\}$ partition $\mathfrak{h}^*$ into a finite number of open convex regions called the Weyl chambers of $\mathfrak{h}^*$. In what follows, we choose and fix a fundamental  Weyl chamber denoted by $K$.

\begin{defi}
Let $\Phi$ be the set of real roots and 
$\Phi_+=\{\alpha\in\Phi\ :\ \forall \beta\in K\quad \langle\alpha,\beta\rangle\}$ be the set of positive roots. Denote  one-half of the sum of positive roots by
$\rho=\frac{1}{2}\sum_{\alpha\in\Phi _+}\alpha$.
\end{defi}

\subsection{Laplace-Beltrami operator}

The Laplace-Beltrami operator is a generalization to Riemannian manifolds (such as Lie groups) of the usual Laplacian operator. We will denote this operator by $\Delta$. To state the following proposition, note that one may identify the set $\widehat{G}$ with a subset of $\Phi_+$ (see the technical appendix in \cite{MR2446754} for further details on this identification).

\begin{prop}\label{somdpi}
The elements of $\widehat{G}$ are the eigenfunctions of $\Delta$. Let $\pi\in\widehat{G}$. The eigenvalue of  $\pi$ is $\lambda_{\pi}=\norme{\pi+\rho}^2-\norme{\rho}^2,$
where $\| \cdot \|$ is the norm induced by the Killing form. For  $\pi\in\widehat{G}$, one has the following relationship between $d_{\pi}$ and $\lambda_{\pi}$
$$\sum_{\pi\in\widehat{G}:\lambda_{\pi}<T}d_{\pi}^2=WT^{(\dim G)/2}+o(T^{(\dim G)/2}) \mbox{ as } T\to \infty, $$
where
\begin{equation}
W=\frac{\text{vol}G}{\left(2\sqrt{\boldsymbol{\pi}}\right)^{\dim G}\Gamma(1+\frac{1}{2}\dim G)}, \label{eq:W}
\end{equation}
with $\text{vol}G$ denoting the volume of $G$, the bold symbol $\boldsymbol{\pi}$ denoting the number Pi and $\Gamma(.)$ being the classical gamma function.
\end{prop}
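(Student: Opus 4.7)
The plan is to establish the three assertions of the proposition in sequence: that each $\pi \in \widehat{G}$ is an eigenfunction of the Laplace-Beltrami operator $\Delta$, that the corresponding eigenvalue has the Freudenthal form $\lambda_\pi = \|\pi+\rho\|^2 - \|\rho\|^2$, and finally the Weyl-type asymptotic of the counting sum $\sum_{\pi : \lambda_\pi < T} d_\pi^2$.

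For the first two assertions, I would endow $G$ with the bi-invariant Riemannian metric induced by the negative of the Killing form $B$, so that $\Delta$ coincides, up to sign, with the image under the (right) regular representation of the Casimir element $\Omega$ of the universal enveloping algebra $U(\mathfrak{g})$. Since $\Omega$ lies in the center of $U(\mathfrak{g})$, Schur's lemma forces $\pi(\Omega)$ to be a scalar $\lambda_\pi\,\mathrm{Id}_{d_\pi}$ on every irreducible representation $\pi$. Propagating this through the Peter-Weyl orthonormal basis $(\sqrt{d_\pi}\phi_{ij}^{\pi})$ from Theorem \ref{theo:PW} shows that each $\phi_{ij}^{\pi}$ is an eigenfunction of $\Delta$ with eigenvalue $\lambda_\pi$, so that this eigenspace has dimension exactly $d_\pi^2$. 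To compute $\lambda_\pi$ explicitly, I would fix a Chevalley-type basis compatible with the root decomposition $\mathfrak{g}^{\CC} = \mathfrak{h} \bigoplus_{\alpha \in \widetilde{\Phi}} \mathfrak{g}^{\alpha}$, write $\Omega$ as a ``Cartan part'' plus $\sum_{\alpha \in \Phi_+}(E_\alpha E_{-\alpha} + E_{-\alpha} E_\alpha)$, and apply $\pi(\Omega)$ to a highest-weight vector annihilated by every $E_\alpha$ with $\alpha \in \Phi_+$; the commutation relations $[E_\alpha, E_{-\alpha}] = H_\alpha$ then collapse the expression to $\langle \pi, \pi + 2\rho\rangle = \|\pi+\rho\|^2 - \|\rho\|^2$.

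The substance of the proof lies in the third assertion, which is Weyl's counting asymptotic for $\Delta$ on the compact Riemannian manifold $G$ of dimension $d = \dim G$ and volume $\text{vol}\, G$, strengthened by the fact from step (i) that the $\lambda_\pi$-eigenspace carries multiplicity $d_\pi^2$. I would proceed via the heat kernel: invoke the Minakshisundaram--Pleijel short-time expansion
\[
\Tr(e^{-t\Delta}) \;=\; \sum_{\pi \in \widehat{G}} d_\pi^2\, e^{-t\lambda_\pi} \;\sim\; \frac{\text{vol}(G)}{(4\boldsymbol{\pi} t)^{d/2}} \quad \text{as } t \to 0^+,
\]
and then apply Karamata's Tauberian theorem to the non-decreasing counting measure $T \mapsto \sum_{\lambda_\pi < T} d_\pi^2$. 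This converts the Laplace-transform asymptotic into $\sum_{\lambda_\pi < T} d_\pi^2 \sim \frac{\text{vol}(G)}{(4\boldsymbol{\pi})^{d/2}\Gamma(d/2+1)} T^{d/2}$, which coincides with $W T^{d/2}$ once one rewrites $(4\boldsymbol{\pi})^{d/2} = (2\sqrt{\boldsymbol{\pi}})^d$, recovering the formula \eqref{eq:W}.

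The main obstacle is making sure that the universal constant coming out of the heat-kernel expansion matches the explicit $W$ in the statement; this is essentially a bookkeeping matter over normalizations. Specifically, one must confirm that the Haar measure $\ud g$ underlying the Peter-Weyl decomposition coincides, up to the scaling implicit in the definition of $\text{vol}\, G$, with the Riemannian volume measure associated to the metric used to define $\Delta$, and equivalently that the $\lambda_\pi$ produced by the Freudenthal formula of step (ii) are indeed the eigenvalues of $\Delta$ in that normalization. Once these conventions are reconciled, the chain ``Casimir $=$ scalar $\Rightarrow$ heat-trace asymptotic $\Rightarrow$ Karamata'' delivers the proposition without further difficulty.
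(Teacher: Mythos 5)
Your proposal is correct, but note that the paper itself does not prove Proposition \ref{somdpi}: it is stated as background, with the argument deferred to the technical appendix of \cite{MR2446754} and to \cite{MR2426516}. Your route --- identify $\Delta$ for the bi-invariant metric attached to (minus) the Killing form with the Casimir element, use Schur's lemma plus the highest-weight computation to get the scalar $\lambda_\pi=\norme{\pi+\rho}^2-\norme{\rho}^2$ on every matrix coefficient $\phi^{\pi}_{ij}$, then obtain the counting asymptotic from the short-time heat-trace expansion $\Tr(e^{-t\Delta})\sim \text{vol}(G)(4\boldsymbol{\pi}t)^{-\dim G/2}$ and Karamata's Tauberian theorem --- is a legitimate, self-contained alternative, and your constant bookkeeping checks out: $(4\boldsymbol{\pi})^{\dim G/2}=(2\sqrt{\boldsymbol{\pi}})^{\dim G}$, so Karamata yields exactly the $W$ of \eqref{eq:W}. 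The cited sources reach the third assertion more algebraically: they parametrize $\widehat{G}$ by dominant weights, express $d_\pi$ through the Weyl dimension formula, and estimate the lattice sum $\sum_{\lambda_\pi<T}d_\pi^2$ by comparison with an integral over the fundamental Weyl chamber. The analytic route buys a shorter constant computation and works verbatim on any compact Riemannian manifold; the algebraic route is more elementary and makes the role of the weight lattice (and of $\rho$) explicit.

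Two points to tighten. First, the $\lambda_\pi$-eigenspace has dimension \emph{at least} $d_\pi^2$, since distinct classes $\pi\neq\pi'$ may share an eigenvalue; what the Tauberian step actually requires is that, by Peter--Weyl completeness (Theorem \ref{theo:PW}), the spectral counting function of $\Delta$ with multiplicity equals $T\mapsto\sum_{\lambda_\pi<T}d_\pi^2$ --- your basis argument does deliver this, so only the phrasing needs adjusting. Second, the Killing form is degenerate whenever the connected compact group $G$ has a positive-dimensional center (e.g.\ a torus factor), so strictly one must assume $G$ semisimple or replace the Killing form by some fixed bi-invariant inner product; this caveat is already implicit in the statement of the proposition, and, as you observe, $\text{vol}\,G$ in \eqref{eq:W} must be the Riemannian volume for that same metric (the probability normalization of the Haar measure used in the Peter--Weyl decomposition does not affect the spectrum).
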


\end{appendix}

\bibliographystyle{alpha}
\bibliography{LieGroupDeconvolution}

\end{document}